\numberwithin{equation}{section} 
\theoremstyle{plain}
\newtheorem{theorem}{Theorem}[section]
\newtheorem{lemma}[theorem]{Lemma}
\theoremstyle{definition}
\newtheorem{definition}[theorem]{Definition}
\theoremstyle{remark}
\newtheorem{remark}[theorem]{Remark}
\newcommand{\vect}[1]{\mathbf{#1}}
\newcommand{\vk}{\vect{k}}
\newcommand{\bu}{\vect{u}}
\newcommand{\bv}{\vect{v}}
\newcommand{\field}[1]{\mathbb{#1}}
\newcommand{\nZ}{\field{Z}}
\newcommand{\nR}{\field{R}}
\newcommand{\nT}{\field{T}}
\newcommand{\vphi}{\varphi}
\newcommand{\maps}{\rightarrow}
\newcommand{\cnj}[1]{\overline{#1}}
\newcommand{\pd}[2]{\frac{\partial #1}{\partial #2}}
\newcommand{\fd}[2]{\frac{d #1}{d #2}}
\newcommand{\abs}[1]{\left\lvert#1\right\rvert}
\newcommand{\set}[1]{\left\{#1\right\}}
\newcommand{\pnt}[1]{\left(#1\right)}
\newcommand{\vor}{\boldsymbol{\omega}}
\begin{document}
\title[Effect Of Boundary Conditions]{
Global Regularity vs. Finite-Time Singularities: Some Paradigms on the Effect of Boundary Conditions and Certain Perturbations
}

\date{January 7, 2014}

\author{Adam Larios}
\address[Adam Larios]{Department of Mathematics\\
                Texas A\&M University\\
        College Station, TX 77843, USA}
\email[Adam Larios]{alarios@math.tamu.edu}
\author{Edriss S. Titi}
\address[Edriss S. Titi]{Department of Mathematics, and Department of Mechanical and Aero-space Engineering, University of California, Irvine, Irvine CA 92697-3875, USA. Also The Department of Computer Science and Applied Mathematics, The Weizmann Institute of Science, Rehovot 76100, Israel. Fellow of the Center of Smart Interfaces, Technische Universit\"at Darmstadt, Germany.}
\email[Edriss S. Titi]{etiti@math.uci.edu and edriss.titi@weizmann.ac.il}


\begin{abstract}
In light of the question of finite-time blow-up vs. global well-posedness of solutions to problems involving nonlinear partial differential equations, we provide several cautionary examples which indicate that modifications to the boundary conditions or to the nonlinearity of the equations can effect whether the equations develop finite-time singularities.  In particular, we aim to underscore the idea that in analytical and computational investigations of the blow-up of three-dimensional Euler and Navier-Stokes equations, the boundary conditions may need to be taken into greater account.  We also
examine a perturbation of the nonlinearity by dropping the advection term in the evolution of the derivative of the solutions to the viscous Burgers equation, which leads to the development of singularities not present in the original equation, and indicates that there is a regularizing mechanism in part of the nonlinearity.  This simple analytical example corroborates recent computational observations in the singularity formation of fluid equations.
\end{abstract}


 \maketitle

\noindent \textbf{MSC Classification.}
35B44,   
35Q35,   
76B03,   
76D03,   
35K55,   
35A01,   
35Q30,   
35Q31    

\noindent {\bf Keywords}: Finite-Time Blow-Up, Global Existence, Boundary Driven Flows, Kuramoto-Sivashinsky, Viscous Hamilton-Jacobi, Kardar-Parisi-Zhang,\\ Burgers Equation, Navier-Stokes Equations, Euler Equations.

 \thispagestyle{empty}

\section{Introduction}\label{sec_Intro_VHJE}

\noindent
A fundamental goal in the study of non-linear initial boundary value problems involving partial differential equations is to determine whether solutions to a given equation develop a singularity in finite time.  Resolving the issue of finite-time blow-up is important, in part because it can have bearing on the physical relevance and validity of the underlying model.  However, determining the answer to this question is notoriously difficult for a wide range of equations; the 3D Navier-Stokes and Euler equations for incompressible fluid flow being perhaps the most well-known examples.  Given that attacking the question directly is so challenging, many researchers have looked for other routes.  One route is to try to simplify or modify the boundary conditions in an attempt to gain evidence for or against the occurrence of finite-time blow-up.  A second route is to modify the equations in some way, and to study the modified equations with the hope of gaining insight into the blow-up of solutions to the  original equations.  In this paper, we will examine several case studies related to such approaches.  A major aim of the present work is to provide examples which demonstrate that one must be extremely cautious in generalizing claims about the blow-up of problems studied in idealized settings to claims about the blow-up of the original problem.  A second aim is to demonstrate a phenomenon which has been observed computationally in the difficult setting of fluid flows in 3D, by means of a simple 1D example, which is amenable to analysis; namely, that a seemingly harmless alteration (from the perspective of enstrophy balance) to the nonlinearity of a problem can cause the formation of a singularity, where no such singularity is present in the unaltered equation.

We will focus on three major cases.  The first case examines the effect of replacing Dirichlet boundary conditions with periodic boundary conditions.  This is often done in both analytical and numerical studies of, e.g.,  the Navier-Stokes and Euler equations.  The original, physical equations come equipped with physical boundary conditions, such as, e.g., Dirichlet boundary conditions in the case of the Navier-Stokes equations.  However, many such studies have tried to search for singularities of the solutions of the equations in the setting of periodic boundary conditions (see, e.g.,  \cite{Kerr_1993,Deng_Hou_Yu_2005,Hou_2009,Hou_Li_2008_Numerical,Hou_Li_2008_Blowup}; in particular, see the surveys \cite{Gibbon_2008,Gibbon_Bustamante_Kerr_2008}, and the references therein).  With this in mind, in section \ref{sec_Periodic_vs_Dirichlet}, we provide an example of an equation which develops a singularity in finite time when Dirichlet boundary conditions are imposed, and yet is globally well-posed in the case of either periodic boundary conditions or the case where the domain is the full space (i.e., in the absence of physical boundaries).  Therefore it may be the case that physical boundary conditions need to be taken into greater consideration in analytic and computational searches for blow-up of the solutions. Indeed, in a recent computational study, the authors of \cite{Hou_Luo_2013_Blowup1D, Luo_Hou_2013_Potentially_Singular} observe the formation of a  finite-time singularity {\it near the boundary} in the 3D Euler equations, of axi-symmetric flow confined in a physical cylinder, subject to no-normal flow  boundary conditions. Notably, a new blow-up criterion for the 3D Euler equations in bounded domains, subject to no-normal flow  boundary conditions, has been established in \cite{Gibbon_Titi}. It is worth stressing that this new criterion does apply for the periodic boundary conditions case or when  the domain is full space, i.e.~in the absence of physical boundaries. For other issues regarding boundary behavior of the Navier-Stokes and Euler equations see the recent surveys \cite{Bardos-Titi1, Bardos-Titi2} and the references therein.

The above discussion is particularly relevant due to the notion of ``boundary
driven'' mechanisms for possible blow-up.  To illustrate how such a mechanism might
work, we give a heuristic scenario in the context of the Navier-Stokes
equations for fluid flow.  It was shown in the celebrated work \cite{Beale_Kato_Majda_1984} that blow-up of the Euler equations occurs if and only if the vorticity becomes infinite (see also \cite{Beale_Kato_Majda_1984,Prodi_1959,Serrin_1962,Constantin_Fefferman_Majda_1996,Constantin_Fefferman_1993,Cao_Titi_2008,Cao_Titi_2011} for additional blow-up criteria).  Infinite vorticity would also cause the Navier-Stokes solutions to become singular.  Now, in the setting of viscous incompressible fluids, physical boundaries are the source of vorticity shedding.  Indeed, near the physical boundary of a fluid, the ``no-slip'' (Dirichlet) boundary conditions can cause the development of boundary layers, where the vorticity is large.
If the viscous diffusion of the fluid velocity is sufficiently small in comparison to the advection, then large magnitudes of the gradient and the vorticity can be propagated from the boundary layer to the interior of the domain by the nonlinear advection term.
The vorticity can then be further intensified by the nonlinear vorticity stretching term, which may thus lead to blow-up of the solution.  Such a physical mechanism does not exist in the periodic setting, nor in the full space $\nR^3$.  It may therefore be illuminating to pay greater attention to the effect of boundary conditions in the search for the blow-up of solutions to the Navier-Stokes and Euler equations.
%
We do not explore these ideas in greater detail as they are only meant to give motivation.  Instead we examine a different, simpler equation in section \ref{sec_Periodic_vs_Dirichlet}, for which we can provide a definite answer.

In section \ref{sec_KS_BC_Pokhozhaev}, we examine the Kuramoto-Sivashinsky equation in a bounded domain with two different types of boundary conditions.  The question of global well-posedness of this equation, when equipped with certain physically relevant  boundary conditions, is still open. Recently, in \cite{Pokhozhaev_2008}, it was shown that, by applying a different (non-physical) set of third-order boundary conditions, a singularity develops in finite time.  In contrast to this, we provide a different set of (also non-physical) third-order boundary conditions for which the equation is globally well-posed.  Therefore, we maintain that it is difficult to obtain information about the blow-up or global well-posedness of an equation by altering its boundary conditions.

We note that such questions relating boundary conditions to blow-up can be highly relevant to applied and computational problems in science.  Indeed, we recall here that such an issue occurred in the study of the planetary geostrophic model used in ocean dynamics. The model is  derived asymptotically by keeping only the hydrostatic balance of the vertical momentum and the  leading order geostrophic balance of the horizonal momentum, where the latter is damped by  the friction with the continental shelf,  while retaining the relevant physical boundary conditions. In \cite{Cao_Titi_Ziane_2004}, it was observed that this model is over determined and hence is ill-posed (it has more boundary conditions than needed for the underlying PDEs).  This observation explains the numerical  instabilities that had been observed near the boundary in simulations of this model.  The resulting oscillations  had proven difficult to eliminate, and were dealt with in \cite{Cao_Titi_Ziane_2004} by adding artificial higher-order diffusion corresponding to the additional boundary conditions in the model.

It is commonly believed that adding  hyper-viscosity  into a numerical scheme enhances the stability of the underlying scheme. In section \ref{Hyperviscosity} we provide in example which questions the validity of this claim. That is, even though the hyper-viscous term enhances the dissipation of small scales, it destroys the maximum principle, which is an essential property for the global stability in certain  physical systems.

Finally, in section \ref{sec_Hou_dropping_the_transport} we consider a certain type of perturbation of the nonlinearity.  In particular, in the context of the Navier-Stokes or Euler equations, by removing the advection term in the vorticity formulation, several recent works  \cite{Hou_Lei_2009,Deng_Hou_Yu_2005,Hou_2009,Hou_Luo_2013_Blowup1D,Luo_Hou_2013_Potentially_Singular} have observed computationally that the solutions of the altered equations seem to blow up in finite time, naming this phenomenon, {\it ``advection depleting singularity"}.  We give an analogous simple example based on a similar alteration of the 1D viscous Burgers equation, and we show analytically that a singularity develops in finite time, which adds credence to the numerical observations of the aforementioned works.  Indeed, since the viscous Burgers equation is globally well-posed, the development of a singularity in the altered model indicates that the removed portion of the nonlinearity has a regularizing effect. However, it is worth stressing that these alterations turn out to be non-local in nature, and in the context of the hydrodynamics equations they translate to modification in the representation of the pressure term.

Many of the results and proofs are not completely new, but, for the sake of being somewhat self-contained, are collected, compared, and contrasted here.  We also aim to state specific, as opposed to general results, whenever doing so simplifies the exposition. The reason for this approach is that our goal is to lay out a simple set of examples and counter-examples for the use of the reader in considering potential mechanisms for singularity formulation or prevention, in particular, in computational studies.

\section{Preliminaries}\label{sec_pre}
\noindent
In this section, we set some notation and recall basic results used below.
We denote by $L^p$, $W^{s,p}$ the usual Lebesgue and Sobolev spaces.  We denote by $C$, $C'$, $C_\Omega$, etc. generic constants which may vary from line to line.

We recall some basic facts about the Laplace operator $\triangle:=\sum_{i=1}^n\partial_{x_i}^2$ in the setting of either periodic or homogeneous Dirichlet boundary conditions (see, e.g., \cite{Evans_2010} for proofs and further discussion).  Recall that the operator $(-\triangle)^{-1}$, subject to the appropriate boundary conditions,  is a positive-definite, self-adjoint, compact operator from $L^2$ into itself, and therefore it has an orthonormal basis of positive eigenfunctions $\set{\vphi_k}_{k=1}^\infty$ (which are also eigenfunctions of  $-\triangle$), corresponding to a sequence of positive eigenvalues. Since the  eigenvalues of $(-\triangle)^{-1}$ can be ordered to be non-increasing,  we can label the eigenvalues of $-\triangle$, which we denote by $\lambda_k$, to be such that $0<\lambda_1\leq\lambda_2\leq\cdots$.

We will pay special attention to the first eigenfunction of $-\triangle$, subject to homogeneous Dirichlet boundary condition,  namely $\vphi_1$, corresponding to $\lambda_1$. We recall Hopf's Lemma, which states that
$-\pd{\vphi_1}{\nu}>0$ on $\partial\Omega$, where $\nu$ is the outward-pointing normal of $\Omega$.    It can also be shown that $\vphi_1$ is strictly positive on $\Omega$.  For proofs of these facts, see, e.g., \cite{Evans_2010}.

We denote the distance function to the boundary by
\begin{align*}
\text{dist}(x,\partial\Omega):=\inf\set{|x-y|:y\in\partial\Omega}.
\end{align*}

Let $\Omega\subset \nR^n$ be a domain which is bounded in at least one direction.  For all $u\in W^{1,p}_0(\Omega)$, $p\geq 1$,
the following Poincar\'e inequality holds
\begin{equation}\label{poincare}
  \|u\|_{L^p}\leq C_\Omega \|\nabla u\|_{L^p},
\end{equation}
with $C = \lambda_1^{-1/2}$ if $p=2$.

We next recall the Gevrey classes of spatially analytic functions.
\begin{definition}
We define the Gevrey classes $G^{s/2}_{\sigma}(\nT^n)$ of spatially analytic functions on the torus $\nT^n:=\nR^n/(2\pi\nZ)^n$,  to be the set of all $u\in L^2(\nT^n)$ such that $\|u\|_{G^{s/2}_{\sigma}(\nT^n)}<\infty$, where
   \begin{align}
      \|u\|_{G^{s/2}_{\sigma}(\nT^n)}:=\pnt{\sum_{\vk\in \nZ^n}|u_\vk|^2(1+|\vk|^2)^s e^{2\sigma(1+|\vk|^2)^{1/2}}}^{1/2},
   \end{align}
   where $u_\vk$ are the Fourier coefficients of $u$, and where $\sigma>0$.
\end{definition}
\noindent Such functions are called Gevrey regular.  Note that formally setting $\sigma=0$, we recover the usual Sobolev spaces $H^s(\nT^n)$.  Furthermore, it can be shown that for $\sigma>0$, $\sigma$ is comparable to the minimal radius of analyticity.

\section{Periodic Vs. Dirichlet Boundary Conditions}\label{sec_Periodic_vs_Dirichlet}

\noindent
Consider the Cauchy problem for the following viscous Hamilton-Jacobi equation,
\begin{subequations}\label{VHJE}
\begin{align}
\label{VHJE_eqn}
u_t-\triangle u&=|\nabla u|^p,  &&\text{ in }\Omega\times(0,T),
\\\label{VHJE_init}
u(0)&=u_0, &&\text{ in }\Omega,
\end{align}
\end{subequations}
\noindent equipped with either periodic boundary conditions or homogeneous Dirichlet boundary conditions.  Many authors have studied the cases $p\in[0,\infty)$ (see, e.g., \cite{Alaa_1996,Amour_Ben-Artzi_1998,Ben-Artzi_Goodman_Levy_2000,Ben-Artzi_Souplet_Weissler_2002,Ben-Artzi_Souplet_Weissler_1999,Gilding_2005,Gilding_Guedda_Kersner_2003}), but in this work, we will focus on the case $p=4$ for simplicity.  In the case $p=2$, is an integrated version of the viscous Burgers equation, and is sometimes referred to as the Kardar-Parisi-Zhang equation, which is used to model the growth and roughening of certain surfaces, as derived in \cite{Kardar_Parisi_Zhang_1986}.  Furthermore, \eqref{VHJE} is an important test equation, since it is one of the simplest examples of a parabolic PDE with non-linear dependence on the gradient.

In the case of periodic boundary conditions, \eqref{VHJE} with $p \ge 2$ is well-posed, globally in time.  However, in the Dirichlet case, and for $p > 2$, a singularity will develop in finite time, for certain  initial data.  We give a relatively simple proof of the well-posedness in the periodic case with $p=4$.  The proof for $p>2$ is given in  \cite{Gilding_Guedda_Kersner_2003}.  For the proof of blow-up in the Dirichlet case, choosing $p=4$ does not appear to make things significantly simpler than allowing $p>2$, so we give the proof for $p>2$.  We follow closely the proof in \cite{Souplet_2002} to show that a singularity occurs in finite time in the Dirichlet case, at least for sufficiently large initial data in the sense given in \eqref{Souplet_Condition}, below.

It is worth noting that the following identity holds for sufficiently smooth functions $u=u(t,x)$:
\begin{align*}
   (\partial_t-\triangle)e^u = (\partial_t u-\triangle u-|\nabla u|^2)e^u,
\end{align*}
Thus, \eqref{VHJE} can be solved explicitly in the case $p=2$, by making the change of variables $v=e^u$ (known as the Cole-Hopf transformation for the Burgers equation), and noting that if $u$ solves \eqref{VHJE}, then $v$ solves the linear heat equation, with the corresponding boundary conditions.


\subsection{Global Well-Posedness in the Periodic Case}
We prove the global existence of solutions to \eqref{VHJE}, for $p=4$ under the assumption of periodic boundary conditions.  We begin by stating a special case of a theorem in \cite{Ferrari_Titi_1998} (which follows ideas from \cite{Foias_Temam_1989}), that gives short-time existence, uniqueness, and regularity.

\begin{theorem}[\cite{Ferrari_Titi_1998}]\label{t:KPZ_p=4_local}
   Let $u_0\in H^s(\nT^n)$, with $s>n/2$, and $\|u_0\|_{H^s(\nT^n)}\leq M_0$ for some $M_0>0$.  Then there exists a $T>0$ depending only upon $M_0$ such that equation \eqref{VHJE}, with $p$ and positive even integer, has a unique solution $u$ on the interval $[0,T)$ with the initial value $u_0$, which satisfies
   $u\in
   C([0,T); H^s(\nT^n))
   \cap
   L^2((0,T); H^2(\nT^n))
   $,
   $\frac{du}{dt}\in L^2((0,T); L^2(\nT^n))$.  Moreover, $u(\cdot,t)\in G^{s/2}_{t}(\nT^n)$ for $t\in[0,T)$.
\end{theorem}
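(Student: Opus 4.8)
The plan is to follow the Galerkin-plus-Gevrey-estimate scheme of \cite{Foias_Temam_1989}, as adapted in \cite{Ferrari_Titi_1998}: one performs the Gevrey a priori bound directly on the finite-dimensional Galerkin system, where the approximate solution is a trigonometric polynomial so that every Gevrey norm is automatically finite and the formal manipulations are legitimate, and only afterwards passes to the limit. Write $\Lambda:=(I-\triangle)^{1/2}$, so that $\Lambda$ acts in Fourier space as multiplication by $\langle\vk\rangle:=(1+|\vk|^2)^{1/2}$ and $\norm{u}_{G^{s/2}_\sigma(\nT^n)}=\norm{\Lambda^s e^{\sigma\Lambda}u}_{L^2}$. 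Rewriting \eqref{VHJE_eqn} as $u_t+\Lambda^2 u=u+|\nabla u|^p$, I would introduce the weighted quantity $w(t):=\Lambda^s e^{\varphi(t)\Lambda}u(t)$ with the linear gauge $\varphi(t)=t$, and track the single functional $y(t):=\norm{w(t)}_{L^2}^2=\norm{u(t)}_{G^{s/2}_t}^2$; note that $y(0)=\norm{u_0}_{H^s}^2\le M_0^2$, since $\sigma=0$ recovers $H^s$.

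Differentiating $y$ and using the equation, the time derivative landing on the weight produces $\varphi'(t)\,\ip{\Lambda w}{w}$, while the linear part contributes the parabolic dissipation $-\norm{\Lambda w}^2$. Because $\varphi'\equiv1$, Young's inequality gives $\varphi'\ip{\Lambda w}{w}-\norm{\Lambda w}^2\le-\tfrac12\norm{\Lambda w}^2+\tfrac12\norm{w}^2$, so the growth of the analyticity radius is dominated by the smoothing. The essential point is to spend the leftover dissipation to absorb one derivative from the nonlinearity: integrating by parts and applying Young's inequality,
\begin{align*}
\ip{\Lambda^s e^{\varphi\Lambda}\pnt{|\nabla u|^p}}{w}
=\ip{\Lambda^{s-1}e^{\varphi\Lambda}\pnt{|\nabla u|^p}}{\Lambda w}
\le\tfrac14\norm{\Lambda w}^2+\norm{|\nabla u|^p}_{G^{(s-1)/2}_{\varphi}}^2 ,
\end{align*}
so that the nonlinearity need only be controlled in the Gevrey-$H^{s-1}$ norm. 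This is the decisive gain, because each factor $\partial_i u$ is then estimated with no loss of derivatives, $\norm{\partial_i u}_{G^{(s-1)/2}_{\varphi}}\le\norm{u}_{G^{s/2}_{\varphi}}$.

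The key lemma to isolate and prove is that, for $s>n/2$, the Gevrey spaces form a Banach algebra and satisfy tame product estimates, uniformly in $\sigma\ge0$; this rests on the subadditivity $\langle\vk\rangle\le\langle\vk-\vj\rangle+\langle\vj\rangle$, which lets the exponential weight $e^{\sigma\langle\vk\rangle}$ pass through the convolution defining a product exactly as in the unweighted Sobolev case. Granting this, since $|\nabla u|^p$ is a polynomial of degree $p$ in the components of $\nabla u$, one obtains $\norm{|\nabla u|^p}_{G^{(s-1)/2}_{\varphi}}\le C\,\norm{u}_{G^{s/2}_{\varphi}}^p$, and the preceding estimates collect into the differential inequality
\begin{align*}
\frac{d}{dt}y+\tfrac14\norm{\Lambda w}^2\le C\,y+C\,y^{p},
\end{align*}
valid for the Galerkin approximations uniformly in the truncation parameter. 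A comparison argument for $y'\le C(y+y^p)$ then yields a time $T=T(M_0)>0$, depending only on $M_0$, on which $y$, and hence every Gevrey norm $\norm{u(t)}_{G^{s/2}_t}$, stays bounded; this is exactly the asserted spatial analyticity with radius growing at least linearly in $t$.

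Finally, the uniform bounds give the approximations bounded in $L^\infty(0,T;H^s)$, the retained dissipation $\int_0^T\norm{\Lambda w}^2\,dt$ supplies one additional spatial derivative in $L^2$ in time, and the equation then controls the time derivative, yielding the regularity classes stated in the theorem; an Aubin--Lions compactness argument provides strong convergence sufficient to pass to the limit in $|\nabla u|^p$ and recover a genuine solution. Uniqueness follows from an $L^2$ energy estimate on the difference of two solutions, bounding $|\nabla u|^p-|\nabla\tilde u|^p$ pointwise by $C(|\nabla u|^{p-1}+|\nabla\tilde u|^{p-1})|\nabla(u-\tilde u)|$ and using the $L^\infty$-control of $\nabla u$, which is immediate for $t>0$ by analyticity and handled near $t=0$ by running the difference estimate in $H^{s-1}$. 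I expect the main obstacle to be the careful verification of the weighted (Gevrey) tame product estimate in the borderline range $n/2<s\le n/2+1$, where $H^{s-1}$ is not itself an algebra and one must combine the subadditivity of $\langle\vk\rangle$ with a fractional Leibniz rule; this, together with the balancing of the weight-growth term against the dissipation, is the analytic heart of the argument.
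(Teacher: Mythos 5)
First, a point of comparison: the paper does not prove this statement at all --- it is quoted as ``a special case of a theorem in \cite{Ferrari_Titi_1998}'' and used as a black box --- so the only fair benchmark is the cited reference. Your Galerkin-plus-Gevrey scheme (performing the a priori estimate on $y(t)=\norm{\Lambda^s e^{t\Lambda}u}_{L^2}^2$ for trigonometric-polynomial approximants, balancing the weight-growth term $\ip{\Lambda w}{w}$ against the dissipation, and closing a Riccati-type inequality) is precisely the Foias--Temam method \cite{Foias_Temam_1989} that Ferrari--Titi implement, so methodologically you have reconstructed the intended proof.

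However, there is a genuine gap at the step you yourself flag as the ``analytic heart'': the product estimate $\norm{|\nabla u|^p}_{G^{(s-1)/2}_{\varphi}}\le C\norm{u}_{G^{s/2}_{\varphi}}^p$ is not merely delicate in the range $n/2<s<n/2+1$; it is \emph{false} there, and no fractional Leibniz rule will rescue it. Already at $\sigma=0$ it asserts $\norm{|\nabla u|^p}_{H^{s-1}}\le C\norm{u}_{H^s}^p$, and the bilinear estimate $\norm{fg}_{H^{s-1}}\lesssim\norm{f}_{H^{s-1}}\norm{g}_{H^{s-1}}$ requires $s-1\ge n/2$. Concretely, for $n=1$, $s=1$, $p=4$ --- exactly the case invoked in Theorem \ref{t:KPZ_p=4_global} --- your claim reads $\norm{u_x^4}_{L^2}=\norm{u_x}_{L^8}^4\le C\norm{u}_{H^1}^4$, which fails because $L^2(\nT)\not\hookrightarrow L^8(\nT)$ (take $u_x\sim|x|^{-1/4}$ near a point). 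Consequently the differential inequality $y'\le C(y+y^p)$ cannot be derived as written for general $s>n/2$. The repair is not cosmetic: one must spend the retained dissipation $\norm{\Lambda w}^2$ on the nonlinearity itself (placing some factors in $G^{(s+1)/2}$ and interpolating), and a scaling count (the critical index is $s_c=n/2+(p-2)/(p-1)>n/2$ for $p>2$) shows the range $n/2<s<s_c$ is supercritical, so a purely perturbative energy argument cannot close there; one needs either time-weighted (Kato--Weissler) norms exploiting parabolic smoothing, or a restriction to larger $s$ (e.g. $s>n/2+1$, where $H^{s-1}$ is an algebra and, by subadditivity of $(1+|\vk|^2)^{1/2}$, so is $G^{(s-1)/2}_\sigma$ uniformly in $\sigma$ --- in that regime your argument is correct and complete in outline). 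The remaining components (legitimacy of the formal computation on Galerkin modes, Aubin--Lions passage to the limit, uniqueness via a difference estimate) are fine modulo the same caveat.
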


With this theorem in hand, we now state and prove a global existence theorem for \eqref{VHJE} with $p=4$ and $n=1$.  For global well-posedness in the general case, see  \cite{Gilding_Guedda_Kersner_2003,Gilding_2005}.

\begin{theorem}\label{t:KPZ_p=4_global}
   Suppose $u_0\in H^1(\nT)$, and consider \eqref{VHJE} in the one-dimensional case with periodic boundary conditions, and $p=4$.  Then the unique, Gevrey regular solution given by Theorem \ref{t:KPZ_p=4_local} can be extended to an arbitrarily large time interval $[0,T]$.
\end{theorem}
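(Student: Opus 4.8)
The plan is to combine the short-time existence and instantaneous smoothing furnished by Theorem \ref{t:KPZ_p=4_local} with a single global-in-time a priori bound. The solution produced by that theorem exists on a maximal interval $[0,T^*)$ and can be continued past any finite time $\tau$ as long as $\|u(\tau)\|_{H^1(\nT)}$ remains finite, with a continuation step depending only on that norm. Hence it suffices to show that $\|u(t)\|_{H^1(\nT)}$ stays bounded on every bounded subinterval of $[0,T^*)$; this forces $T^*=+\infty$. Since for each $t>0$ the solution is Gevrey regular, hence $C^\infty$, all integrations by parts below are legitimate on $(0,T^*)$, and the estimates may be run starting from an arbitrarily small positive time.

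The heart of the argument is an energy estimate for $u_x$ in which the quartic nonlinearity vanishes by virtue of the periodic boundary conditions. Multiplying \eqref{VHJE_eqn} (with $n=1$, $p=4$) by $-u_{xx}$ and integrating over $\nT$, the nonlinear term becomes an exact derivative,
\[
   \int_\nT |u_x|^4\,u_{xx}\,dx=\tfrac15\int_\nT \partial_x\!\big((u_x)^5\big)\,dx=0,
\]
so that $\tfrac12\tfrac{d}{dt}\|u_x\|_{L^2}^2+\|u_{xx}\|_{L^2}^2=0$. I would read off two facts valid on all of $[0,T^*)$: first, $\|u_x(t)\|_{L^2}\le\|\partial_x u_0\|_{L^2}$ is non-increasing; second, $\int_0^T\|u_{xx}\|_{L^2}^2\,dt\le\tfrac12\|\partial_x u_0\|_{L^2}^2$ for every $T<T^*$. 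This is precisely the cancellation that periodicity buys: for homogeneous Dirichlet data the same computation leaves a boundary contribution $(u_x)^5|_{\partial\Omega}$ which need not vanish, and the estimate fails to close, consistent with the finite-time gradient blow-up established later in the Dirichlet case.

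It remains to upgrade uniform control of $\|u_x\|_{L^2}$ to a bound on the full $H^1$ norm. Writing $\bar u(t)$ for the spatial mean, the equation gives $\tfrac{d}{dt}\bar u=|\nT|^{-1}\int_\nT|u_x|^4\,dx$, and by the one-dimensional Gagliardo--Nirenberg inequality together with the uniform $L^2$ bound on $u_x$ and the time-integrated bound on $\|u_{xx}\|_{L^2}^2$, the right-hand side is integrable in time on $[0,T]$; hence $\bar u$ stays bounded on finite intervals. The zero-mean part is controlled by the Poincar\'e inequality for mean-zero functions, $\|u-\bar u\|_{L^2}\le C\|u_x\|_{L^2}$, so $\|u(t)\|_{H^1}$ is bounded on every $[0,T]$ with $T<T^*$. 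Feeding this into the continuation criterion yields $T^*=+\infty$, and the Gevrey regularity of Theorem \ref{t:KPZ_p=4_local} persists on each $[0,T)$. If one prefers to bootstrap in high regularity rather than rely on $H^1$-continuation, one can differentiate the equation, set $w=u_x$ (which solves the viscous conservation law $w_t-w_{xx}=\partial_x(w^4)$), extract a uniform $L^\infty$ bound on $w$ from the parabolic maximum principle, and then close a Gronwall inequality for $\|u_{xx}\|_{L^2}^2$ and successively higher norms.

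The main obstacle is conceptual rather than computational: recognizing the test function $-u_{xx}$ (equivalently, the $L^2$ estimate for $w=u_x$ exploiting the exact-derivative form of the nonlinearity) that makes the quartic term integrate to zero under periodic boundary conditions. Once this cancellation is in hand, controlling the mean, applying Poincar\'e, and invoking the continuation criterion are routine. The delicate point is exactly that this cancellation is a feature of the periodic (or whole-space) setting and is destroyed by physical boundaries, which is the phenomenon this section is designed to illustrate.
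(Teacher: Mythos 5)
Your argument is correct, and it reaches the conclusion by a genuinely different route than the paper. The paper's proof sets up a contradiction with the maximal existence time $T^*$ and runs a Hopf--Stampacchia truncation argument on $v=u_x$ (testing $\theta=v-\|v_*\|_{L^\infty}$ against $\theta^+$), obtaining the pointwise maximum principle $\|u_x(t)\|_{L^\infty}\le\|u_x(\tfrac{T^*}{2})\|_{L^\infty}$; the nonlinearity $|u_x|^4$ is then bounded in $L^\infty$ and the $L^2$ estimate for $u$ closes immediately. You instead test with $-u_{xx}$ and use that $\int_\nT (u_x)^4u_{xx}\,dx=\tfrac15\int_\nT\partial_x\bigl((u_x)^5\bigr)\,dx=0$ under periodicity, which yields the clean identity $\tfrac12\tfrac{d}{dt}\|u_x\|_{L^2}^2+\|u_{xx}\|_{L^2}^2=0$, hence monotone decay of $\|u_x\|_{L^2}$ plus a time-integrated bound on $\|u_{xx}\|_{L^2}^2$; you then need the extra Gagliardo--Nirenberg interpolation step to show $\int_0^T\|u_x\|_{L^4}^4\,dt<\infty$ and thereby control the spatial mean, after which Poincar\'e for mean-zero functions closes the $H^1$ continuation criterion (which, as in the paper, is legitimate since Theorem \ref{t:KPZ_p=4_local} applies with $s=1>n/2$ and a local time depending only on $\|u(\tau)\|_{H^1}$). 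Both proofs ultimately exploit the same cancellation --- the nonlinearity in the equation for $u_x$ is an exact spatial derivative, which periodicity integrates to zero --- but your energy route is shorter and exhibits a genuine dissipative structure for $\|u_x\|_{L^2}$, at the price of the interpolation argument for the mean; the paper's maximum-principle route requires the truncation machinery but delivers the stronger pointwise bound on $u_x$, which makes the remaining estimates trivial and parallels the technique reused elsewhere in the paper. Your closing observation that the Dirichlet case leaves an uncontrolled boundary term $(u_x)^5\big|_{\partial\Omega}$ is exactly the right diagnosis of why the cancellation is a feature of the periodic setting. (Your sketched alternative via the maximum principle for $w=u_x$ solving $w_t-w_{xx}=\partial_x(w^4)$ is, in essence, the paper's actual proof.)
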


\begin{proof}
First note that, since $p=4$, the right-hand side of \eqref{VHJE} is real analytic in $u_x$, and we have short-time existence and uniqueness (say, on a time interval $[0,T]$) of \eqref{VHJE} under periodic boundary conditions by using, e.g., the Galerkin method.  Furthermore, as shown in  \cite{Ferrari_Titi_1998}, the solution is Gevrey regular in space.  In particular, it has continuous derivatives of all orders.  It remains to show that the solution exists globally in time.  Suppose $[0,T^*)$ is the maximal interval of existence. If $T^*=\infty$ there is nothing to prove. Therefore, we assume by contradiction that $T^*<\infty$.  From the above regularity, we infer in particular, that $u(\cdot, \frac{T^*}{2}) \in H^2(\nT)$.   We use a technique of E. Hopf and G. Stampacchia (cf. \cite{Kinderlehrer_Stampacchia_1980,Temam_1997_IDDS}) to prove a maximum principle for $u_x$.  Write $v := u_x$ and $v_* (\cdot) := u_x(\cdot,\frac{T^*}{2})$.  For any function $f\in H^1$, we use the standard notation $f^+:=\max\set{f,0}$.  It is a standard exercise (see, e.g., \cite{Evans_2010}, section 5.10) to show that $f\in H^1$ implies $f^+\in H^1$.  Taking the derivative of \eqref{VHJE_eqn}, we have
\(
   v_t -v_{xx} = 4v^3 v_x.
\)
Let us denote
\begin{align*}
   \theta(x,t) := v(x,t)-\|v_*\|_{L^\infty}.
\end{align*}
Since $\|v_*\|_{L^\infty}$ is a constant, $\theta_x=v_x$ and $\theta_t=v_t$, so that
\begin{align}\label{VHJE_stamppaccia_eqn}
   \theta_t - \theta_{xx} - 4 v^3\theta_x =0.
\end{align}
Taking the inner product in $L^2$ of \eqref{VHJE_stamppaccia_eqn} with $\theta^+$, we integrate by parts several times and use the fact that $\theta^+\theta = (\theta^+)^2$ to find
\begin{align*}
   \frac{1}{2}\frac{d}{dt}\|\theta^+\|_{L^2}^2+\|\theta^+_x\|_{L^2}^2
   &=
   \int_\nT 4 v^3\theta_x\theta^+\,dx
   =
   \int_\nT 2 v^3((\theta^+)^2)_x\,dx
   \\&=
   -\int_\nT 6 v^2v_x(\theta^+)^2\,dx
   =
   -\int_\nT 2 v^2((\theta^+)^3)_x\,dx
   \\&=
   \int_\nT 4 vv_x(\theta^+)^3\,dx
   =
  \int_\nT  v((\theta^+)^4)_x\,dx
  \\&=
  -\int_\nT  v_x(\theta^+)^4\,dx
  =
  -\int_\nT \frac{1}{5} ((\theta^+)^5)_x\,dx
  =0.
\end{align*}
Thus, integrating in time, for a.e. $t\in[\frac{T^*}{2},T^*)$ we have
\begin{align*}
   \|\theta^+(t)\|_{L^2}^2 \leq \|\theta^+(\tfrac{T^*}{2})\|_{L^2}^2=0.
\end{align*}
Thus, $\theta^+(t)\equiv 0$, and so, $v(x,t)\leq\|v_*\|_{L^\infty}$, for   $t\in[\frac{T^*}{2},T^*)$.  Similarly, one can show that $-v(x,t)\leq\|v_*\|_{L^\infty}$, and thus we have
\begin{align*}
   \|u_x(t)\|_{L^\infty}\leq\|u_x(\tfrac{T^*}{2})\|_{L^\infty},
\end{align*}
for $t\in[\frac{T^*}{2},T^*)$.
Next, taking the inner product of \eqref{VHJE_eqn} with $u$ and using the Lions-Magenes Lemma, we have, for $t\in[\frac{T^*}{2},T^*)$,
\begin{align*}
   \frac{1}{2}\frac{d}{dt}\|u\|_{L^2(\nT)}^2
   + \|u_x\|_{L^2(\nT)}^2
   &=
   \int_\nT (u_x)^4 u\,dx
   \leq
   \|u_x(\tfrac{T^*}{2})\|_{L^\infty}^4\|u\|_{L^1}
   \leq
   C\|v_*\|_{L^\infty}^4\|u\|_{L^2}.
\end{align*}
Integrating the above inequality now yields
\begin{align*}
   \|u(t)\|_{L^2(\nT)}
   \leq
   \pnt{\|u_x(\tfrac{T^*}{2})\|_{L^\infty(\nT)}^4T + \|u_0\|_{L^2(\nT)}^{1/2}}^2<\infty,
\end{align*}
for $t\in[\frac{T^*}{2},T^*)$.  Thus, from the above and  Theorem \ref{t:KPZ_p=4_local} one can extend the solution beyond $T^*$, which leads into a contradiction. Consequently,  $T^*= \infty$.
\end{proof}

\subsection{Finite-Time Blow-Up in the Dirichlet Case}
In this section, we investigate the existence, uniqueness, and the finite-time blow-up, of solutions to \eqref{VHJE}, under the assumption of Dirichlet boundary conditions.  The short time existence and uniqueness of solutions to \eqref{VHJE} can be proven by using, e.g., Duhamel's principle and the Schauder fixed point theorem, see, e.g. \cite{Friedman_1983}.  We state the theorem without proof here.

\begin{theorem}[Short time existence]\label{t:VHJE_Dirichlet_short_time}
Let $\Omega\subset\nR^n$ be a bounded $C^2$ domain.   Suppose $u_0\in C^{1+\alpha}(\Omega)$ for some $\alpha\in(0,1)$ and $u_0\equiv 0$ on $\partial\Omega$.  Then there exists a $T>0$ such that \eqref{VHJE} has a unique solution in $C^{1+\alpha}(\Omega\times[0,T])\cap C(\cnj{\Omega}\times[0,T])$, with $u\equiv 0$ on $\partial\Omega$.
\end{theorem}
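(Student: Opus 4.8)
The plan is to recast \eqref{VHJE} as an integral equation by Duhamel's principle and then invoke the Schauder fixed point theorem. Let $S(t)=e^{t\triangle}$ be the heat semigroup generated by the Dirichlet Laplacian on $\Omega$; then any solution of \eqref{VHJE} satisfying $u\equiv 0$ on $\partial\Omega$ formally obeys the mild formulation
\begin{align*}
   u(t) = S(t)u_0 + \int_0^t S(t-s)\,\abs{\nabla u(s)}^p\,ds.
\end{align*}
I would take as my underlying space a closed ball in the parabolic H\"older space modeled on $C^{1+\alpha}(\cnj{\Omega}\times[0,T])$, centered so as to accommodate $S(t)u_0$, and restricted by the constraint $u\equiv 0$ on $\partial\Omega$. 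Defining $\cA u$ to be the right-hand side above, a fixed point of $\cA$ is exactly the sought-after solution, and the semigroup $S(t-s)$ automatically preserves the homogeneous boundary condition.

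First I would establish the smoothing estimates that make $\cA$ well-defined on this ball. Since $u\in C^{1+\alpha}$ forces $\nabla u\in C^\alpha$ in space, and since for $p\geq 1$ the map $z\mapsto\abs{z}^p$ is locally Lipschitz, the nonlinearity $\abs{\nabla u}^p$ lies in $C^\alpha$ with norm controlled by a power of the radius of the ball. The key parabolic Schauder bound is
\begin{align*}
   \norm{\nabla S(\tau)f}_{C^\alpha(\cnj{\Omega})}\leq C\,\tau^{-1/2}\norm{f}_{C^\alpha(\cnj{\Omega})},
\end{align*}
which shows that differentiating the Duhamel integral produces only the integrable singularity $\tau^{-1/2}$. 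Integrating in $s$ converts this into a factor $T^{1/2}$, so that $\norm{\cA u}_{C^{1+\alpha}}\leq\norm{S(t)u_0}_{C^{1+\alpha}}+C\,T^{1/2}R^p$ on a ball of radius $R$. Choosing $T$ small, depending only on $\norm{u_0}_{C^{1+\alpha}}$, I would conclude that $\cA$ maps the ball into itself.

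Next I would check the remaining hypotheses of Schauder's theorem. Continuity of $\cA$ follows from the local Lipschitz dependence of $\abs{\cdot}^p$ together with the same semigroup estimate. For compactness, the essential point is that $\cA$ gains regularity in both space and time: the Duhamel term is bounded in a strictly better H\"older class, and is uniformly H\"older in time, so that the Arzel\`a--Ascoli theorem renders the image of the ball precompact in $C^{1+\alpha}(\cnj{\Omega}\times[0,T])$. The Schauder fixed point theorem then yields a fixed point, hence a solution in the asserted regularity class vanishing on $\partial\Omega$.

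Finally, since the Schauder argument does not by itself give uniqueness, I would prove it via a singular Gronwall inequality. If $u_1,u_2$ are two solutions with the same data, their difference $w=u_1-u_2$ satisfies $w(t)=\int_0^t S(t-s)\pnt{\abs{\nabla u_1}^p-\abs{\nabla u_2}^p}\,ds$; using the bound $\bigl|\abs{a}^p-\abs{b}^p\bigr|\leq C\abs{a-b}$ valid on the bounded set where the gradients range, together with the $\tau^{-1/2}$ gradient estimate, gives $\norm{w(t)}_{C^1}\leq C\int_0^t(t-s)^{-1/2}\norm{w(s)}_{C^1}\,ds$, whence the Gronwall--Henry inequality forces $w\equiv 0$. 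The main obstacle throughout is precisely the gradient nonlinearity: one must track the $\tau^{-1/2}$ time singularities with care so that every Duhamel integral converges and the self-mapping, continuity, and uniqueness estimates all close, and one must ensure that the parabolic Schauder estimates for the Dirichlet Laplacian remain valid up to the boundary of the $C^2$ domain $\Omega$.
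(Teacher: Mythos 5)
Your outline is essentially the argument the paper itself points to: the paper states this theorem without proof, remarking only that it follows from Duhamel's principle together with the Schauder fixed point theorem and citing Friedman's book for the details. Your mild formulation, the $\tau^{-1/2}$ gradient-smoothing estimate, the compactness step, and the singular Gronwall uniqueness argument are exactly the standard implementation of that strategy, so the proposal matches the paper's intended route.
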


In \cite{Alaa_1996} it is shown that \eqref{VHJE}  under homogeneous Dirichlet boundary conditions, cannot have a global solution if $u_0$ is very irregular (namely, if $u_0$ is a positive measure satisfying certain conditions).  In \cite{Souplet_2002} it is shown that for smooth, but sufficiently large initial data (in a sense of \eqref{Souplet_Condition}, below), the solution $u$ blows up in finite
time for $p>2$.  In particular, it is shown that so-called ``gradient blow-up'' occurs; that is, $u$ remains uniformly bounded, but $\limsup_{t\maps T^*}\|\nabla u(t,\cdot)\|_{L^\infty}=\infty$, where $T^*<\infty$ is the maximal existence time for $u$.
The idea is to exploit properties of the first eigenvalue of the negative Laplacian operator, subject to homogeneous Dirichlet boundary conditions.  Let $\lambda_1>0$ be the smallest eigenvalue of $-\triangle$, with homogeneous Dirichlet boundary conditions, and $\vphi_1$ a corresponding eigenfunction, chosen as in section \ref{sec_pre}.
We begin with two lemmas.  The first is used to support the second, and the second is that $\int_\Omega (\vphi_1(x))^{-\alpha}\,dx<\infty$ for $\alpha\in(0,1)$.   This means that we have a certain growth of $\vphi_1$ near the boundary, and will be needed in subsequent calculations.

\begin{lemma}\label{L:dist}
   \label{PhiDist}
   Assume that $\Omega\subset\nR^n$ is a bounded domain with $C^2$ boundary.  Then there exists a constant $C>0$ such that for all $x\in\Omega$,
   $$\vphi_1(x)\geq C\cdot \emph{\text{dist}}(x,\partial\Omega).$$
\end{lemma}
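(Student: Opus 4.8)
The plan is to split $\Omega$ into a thin boundary collar, where the estimate follows from Hopf's Lemma, and a compact interior region, where it follows from the strict positivity of $\vphi_1$; the two bounds are then combined by taking the smaller constant. Throughout I write $d(x):=\text{dist}(x,\partial\Omega)$.

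First I would record the regularity facts to be used. Since $-\triangle\vphi_1=\lambda_1\vphi_1$ with $\vphi_1\in L^2(\Omega)$ and $\partial\Omega\in C^2$, elliptic regularity (bootstrapping, followed by Schauder estimates up to the boundary) gives $\vphi_1\in C^1(\overline{\Omega})$, so that $\nabla\vphi_1$ is continuous up to $\partial\Omega$. By Hopf's Lemma, $-\pd{\vphi_1}{\nu}>0$ at every boundary point; since $\partial\Omega$ is compact and $\nabla\vphi_1$ is continuous there, there is a uniform constant $c_0>0$ with $-\pd{\vphi_1}{\nu}\ge c_0$ on $\partial\Omega$.

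Next, the boundary collar. Because $\partial\Omega$ is $C^2$ and compact, there is a $\delta>0$ such that every $x$ with $d(x)<\delta$ has a unique nearest boundary point $\bar x\in\partial\Omega$, with $x=\bar x-d(x)\,\nu(\bar x)$, where $\nu$ is the outward unit normal. For such $x$, since $\vphi_1(\bar x)=0$, the fundamental theorem of calculus along the inward normal gives
\begin{align*}
   \vphi_1(x)=\int_0^{d(x)}\pnt{-\nabla\vphi_1(\bar x-s\,\nu(\bar x))\cdot\nu(\bar x)}\,ds.
\end{align*}
At $s=0$ the integrand equals $-\pd{\vphi_1}{\nu}(\bar x)\ge c_0$, and by uniform continuity of $\nabla\vphi_1$ on $\overline{\Omega}$ together with compactness of $\partial\Omega$, after possibly shrinking $\delta$ the integrand stays $\ge c_0/2$ for all $s\in[0,\delta]$ and all $\bar x\in\partial\Omega$. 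Hence $\vphi_1(x)\ge\tfrac{c_0}{2}\,d(x)$ on the collar $\set{x\in\Omega:d(x)<\delta}$. For the interior, the set $K:=\set{x\in\overline{\Omega}:d(x)\ge\delta}$ is compact, and $\vphi_1$ is continuous and strictly positive on it, so it attains a minimum $m>0$; since $\Omega$ is bounded, $d(x)\le D:=\text{diam}(\Omega)$ everywhere, whence $\vphi_1(x)\ge m\ge\tfrac{m}{D}\,d(x)$ on $K$. Taking $C:=\min\set{c_0/2,\,m/D}$ yields the claim on all of $\Omega$.

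The main obstacle is making the collar estimate rigorous, which rests on two ingredients: that $\vphi_1\in C^1(\overline{\Omega})$ (so $\nabla\vphi_1$ has boundary values and is uniformly continuous), and that the $C^2$ geometry of $\partial\Omega$ furnishes the uniform normal-coordinate representation $x=\bar x-d(x)\,\nu(\bar x)$ on a fixed collar of width $\delta$. Granting these, Hopf's Lemma upgraded to the uniform bound $c_0$ supplies the decay rate, and the interior estimate is then a routine compactness argument.
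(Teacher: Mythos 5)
Your proof is correct and follows essentially the same route as the paper's: a Hopf's Lemma estimate made uniform over the $C^2$ boundary to control a thin collar, combined with a compactness argument for the interior, and a minimum of the two constants. The only cosmetic differences are that you make the collar bound explicit via the fundamental theorem of calculus along the inward normal (the paper phrases it as uniformity of the limit $\lim_{s\to 0^+}s^{-1}\vphi_1(x_0-s\nu)$), and you get the interior lower bound from continuity and strict positivity rather than from the minimum principle.
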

The proof is a fairly straight-forward application of Hopf's Lemma, but we include it here for completeness.
Note that in the one-dimensional case, with $\Omega = (0,\pi)$, we have $\vphi_1(x) = \frac{\pi}{2\pi}\sin(x)$, and the results of Lemmas \ref{L:dist} and \ref{L:alpha} are trivial in this case.

\begin{proof}
     Since $\Omega$ is $C^2$, it satisfies the interior sphere condition.  Therefore for $x\in\Omega$ sufficiently close to $\partial\Omega$, we may write $x=x_0-s\nu$ for some $x_0\in\partial\Omega$, $s>0$ and where $\nu$ is the exterior normal of $\Omega$.  Then $\text{dist}(x,\partial\Omega)=\text{dist}(x,x_0)=s$.  Note that by Hopf's Lemma, $\lim_{s\maps0^+}s^{-1}\vphi_1(x_0-s\nu)>0$.  Since $\partial\Omega$ is $C^2$, and $\Omega$ is bounded, we have sufficient regularity on $\vphi_1$ to conclude that this limit is uniform in $x_0$.  Thus (choosing $x$ closer to $\partial\Omega$ if necessary), there exists a constant $\tilde{C}>0$ such that $s^{-1}\vphi_1(x_0-s\nu)\geq \tilde{C}$, that is, $\vphi_1(x)\geq \tilde{C}\cdot\text{dist}(x,\partial\Omega)$ for all $x$ sufficiently close to the boundary, say within $\epsilon-$neighborhood, for some  $\epsilon\in(0,1)$.  Since $\Omega$ is bounded, the set $E:=\set{x\in\Omega:\text{dist}(x,\partial\Omega)\geq\epsilon}$ is compact. By the elliptic maximum principle, $\vphi_1\geq\tilde{C}\epsilon$ on $E$.  Setting $C=\tilde{C}\epsilon/(1+\text{diam}(\Omega))$, we have $\vphi_1(x)\geq C\cdot\text{dist}(x,\partial\Omega)$, as desired.
\end{proof}
Using this lemma, we next show that $\vphi_1$ satisfies a certain growth condition near its zeros (that is, near the boundary).  This is the main property that is exploited in \cite{Souplet_2002} to show finite-time blow-up.  A crucial lemma, proved in \cite{Souplet_2002}, but also stated (without proof) in \cite{Alaa_1996} and \cite{Fila_Lieberman_1994}, is the following.
\begin{lemma}\label{L:alpha}
   Assume that $\Omega$ is a bounded domain in $\nR^n$ with $C^2$ boundary, and let $\alpha\in(0,1)$.  Then
   $$\int_\Omega\vphi_1^{-\alpha}(x)\,dx<\infty.$$
\end{lemma}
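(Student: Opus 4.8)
The plan is to reduce the statement to an integrability estimate on the distance function and then to exploit the tubular-neighborhood structure of a $C^2$ domain. By Lemma~\ref{L:dist} there is a constant $C>0$ with $\vphi_1(x)\geq C\,\text{dist}(x,\partial\Omega)$ for all $x\in\Omega$, and therefore $\vphi_1^{-\alpha}(x)\leq C^{-\alpha}\,\text{dist}(x,\partial\Omega)^{-\alpha}$. Hence it suffices to prove that $\int_\Omega \text{dist}(x,\partial\Omega)^{-\alpha}\,dx<\infty$. Writing $d(x):=\text{dist}(x,\partial\Omega)$, I would fix a small $\delta>0$ and split the integral over the boundary layer $\Omega_\delta:=\set{x\in\Omega:d(x)<\delta}$ and the interior region $\Omega\setminus\Omega_\delta$.

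The interior region is harmless: on $\Omega\setminus\Omega_\delta$ one has $d(x)^{-\alpha}\leq \delta^{-\alpha}$, and since $\Omega$ is bounded it has finite Lebesgue measure, so this contribution is at most $\delta^{-\alpha}\abs{\Omega}<\infty$. All the work is in the boundary layer. Here I would invoke the fact that, because $\partial\Omega$ is $C^2$, the outward unit normal $\nu$ is $C^1$ and the map $\Phi(y,s):=y-s\nu(y)$ is, for $\delta$ small enough, a $C^1$-diffeomorphism from $\partial\Omega\times(0,\delta)$ onto $\Omega_\delta$, under which $d\pnt{\Phi(y,s)}=s$ and the Jacobian is bounded above by a constant $C'$. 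Changing variables then gives
\begin{equation*}
   \int_{\Omega_\delta} d(x)^{-\alpha}\,dx
   \leq
   C'\int_{\partial\Omega}\!\int_0^\delta s^{-\alpha}\,ds\,d\sigma(y)
   =
   \frac{C'\,\abs{\partial\Omega}}{1-\alpha}\,\delta^{1-\alpha}
   <\infty,
\end{equation*}
where the single-variable integral $\int_0^\delta s^{-\alpha}\,ds$ converges precisely because $\alpha\in(0,1)$, i.e.\ the exponent $-\alpha$ is strictly greater than $-1$. Adding the two contributions yields the claim.

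The main obstacle is the geometric input in the second paragraph: that for a bounded $C^2$ domain there exists a collar neighborhood of $\partial\Omega$ on which the normal map $\Phi$ is a diffeomorphism with two-sided Jacobian bounds and on which $d$ coincides with the normal coordinate $s$. This is exactly where the $C^2$ regularity of $\partial\Omega$ is used (it guarantees $\nu\in C^1$ and, via the tubular-neighborhood theorem, an invertible, nondegenerate $\Phi$ for small $\delta$), and it is the only non-elementary step. An alternative route that sidesteps the explicit coordinates is the coarea formula: since $\abs{\nabla d}=1$ a.e., one has $\int_\Omega d^{-\alpha}\,dx=\int_0^{\sup d} t^{-\alpha}\,\mathcal{H}^{n-1}\pnt{\set{d=t}}\,dt$, and it then remains to bound the surface measure of the level sets $\set{d=t}$ uniformly in $t$ by a constant, which again follows from the $C^2$ structure near $\partial\Omega$ together with boundedness of $\Omega$. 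Either way, once the uniform geometric bound is in hand, the convergence is an immediate consequence of $\alpha<1$.
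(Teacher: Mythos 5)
Your proof is correct, and it shares the paper's first step --- reducing, via Lemma~\ref{L:dist}, to the integrability of $\mathrm{dist}(x,\partial\Omega)^{-\alpha}$ --- but it handles the boundary layer by a genuinely different device. The paper uses a partition of unity and local graph coordinates: near a boundary point it writes $\partial\Omega$ as the graph $x_n=f(x_n')$ of a $C^2$ function, bounds the distance from below by the vertical gap, $\mathrm{dist}(x,\partial\Omega)\geq (f(x_n')-x_n)/\sqrt{1+\|\nabla f\|_{L^\infty}^2}$ (obtained by projecting onto the normal), and then integrates $(f(x_n')-x_n)^{-\alpha}$ in the $x_n$ variable by Fubini. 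You instead invoke the tubular-neighborhood parametrization $\Phi(y,s)=y-s\nu(y)$ with two-sided Jacobian bounds and the identity $d(\Phi(y,s))=s$, or alternatively the coarea formula with a uniform bound on $\mathcal{H}^{n-1}(\{d=t\})$. Both are legitimate uses of the $C^2$ hypothesis, and both ultimately reduce to $\int_0^\delta s^{-\alpha}\,ds<\infty$. The trade-off: the paper's route needs only a one-sided lower bound on the distance in a flattened chart plus Fubini, so it avoids the (standard but heavier) facts that the normal map is a diffeomorphism and that the nearest-point projection is single-valued in the collar; your route is cleaner globally (no partition of unity, and the constant is explicit in terms of $|\partial\Omega|$ and the collar width) but leans on the tubular-neighborhood theorem as a black box, which is precisely the step you correctly identify as the only non-elementary input.
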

\begin{proof}
   We follow closely the proof in \cite{Souplet_2002}.  We can use a partition of unity to reduce to a local argument.  Since $\Omega$ is bounded with $C^2$ boundary, it can be locally represented as the graph of a $C^2$ function, say $f:U_0\maps(-\epsilon,\epsilon)$, for some $\epsilon>0$, where $x=(x_n',x_n)\in \nR^{n-1}\times\nR$, and
\begin{align*}
U := \set{x\in\nR^n:|x_n'|\leq \epsilon, |x_n|<\epsilon}
,\quad
U_0 := \set{x_n'\in\nR^{n-1}: |x_n'|\leq\epsilon}, \epsilon>0,
\end{align*}
and $f(0)=0$.  Furthermore, we define
\begin{align*}
   \omega := \set{x\in U: x_n<f(x_n')}
   ,\quad
   \Gamma := \set{x\in U: x_n = f(x_n')}.
\end{align*}
By projecting the vector $(\vec{0},f(x_n')-x_n)$ onto the inward-pointing normal of the graph of $f$, it follows that
   \begin{align*}
      \text{dist}(x,\partial\Omega)\geq \frac{f(x_n')-x_n}{\sqrt{1+\|\nabla f\|_{L^\infty}^2}}.
   \end{align*}
Using this and Lemma \ref{L:dist}, we estimate
\begin{align*}
   \int_\omega(\vphi_1(x))^{-\alpha}\,dx
   &\leq
   c^{-\alpha}\int_\omega (\text{dist}(x,\Gamma))^{-\alpha}\,dx
   =
   c^{-\alpha}\int_{U_0}\int_{-\epsilon}^{f(x_n')}(\text{dist}(x,\Gamma))^{-\alpha}\,dx_n\,dx_n'
   \\&\leq
   c^{-\alpha}\pnt{1+\|\nabla f\|_{L^\infty}^2}^{\alpha/2}
   \int_{U_0}\int_{-\epsilon}^{f(x_n')}(f(x_n')-x_n)^{-\alpha}\,dx_n\,dx_n'
   <\infty,
\end{align*}
since $\alpha<1$.
\end{proof}

Next, we seek a lower bound on $\int_\Omega |\nabla u|^p\vphi_1(x)\,dx$.
\begin{lemma}\label{l:poincare_phi_1}
   Let $p>2$, and suppose $\Omega\subset\nR^n$ is a $C^2$ bounded  domain.  Then there exists a constant $C_{\Omega,p}>0$ such that for any function $v\in W^{1,p}_0(\Omega)$,
   \begin{align}\label{poincare_phi_1}
C_{\Omega,p}\abs{\int_\Omega v(t,x)\vphi_1(x)\,dx}^p
\leq
\int_{\Omega}|\nabla v|^p \vphi_1\,dx.
   \end{align}
   Furthermore, no such constant exists for $1\leq p\leq 2$.
\end{lemma}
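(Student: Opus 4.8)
The plan is to reduce \eqref{poincare_phi_1} to a weighted integration-by-parts identity and then apply H\"older's inequality with the weight $\varphi_1$ split between the two factors, so that the finiteness of the resulting weight integral is controlled precisely by Lemma \ref{L:alpha}. First I would treat the case $p>2$. Since $v\in W^{1,p}_0(\Omega)$ and $\varphi_1$ both vanish on $\partial\Omega$, and $-\triangle\varphi_1=\lambda_1\varphi_1$, integration by parts (justified for general $v\in W^{1,p}_0(\Omega)$ by density of smooth functions vanishing on $\partial\Omega$) gives the identity
\[
\lambda_1\int_\Omega v\,\varphi_1\,dx = \int_\Omega v(-\triangle\varphi_1)\,dx = \int_\Omega \nabla v\cdot\nabla\varphi_1\,dx.
\]

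Next I would write the integrand as $\bigl(\varphi_1^{1/p}\nabla v\bigr)\cdot\bigl(\varphi_1^{-1/p}\nabla\varphi_1\bigr)$ and apply H\"older's inequality with exponents $p$ and $p'=p/(p-1)$, obtaining
\[
\lambda_1\left|\int_\Omega v\,\varphi_1\,dx\right| \le \left(\int_\Omega|\nabla v|^p\varphi_1\,dx\right)^{1/p}\left(\int_\Omega|\nabla\varphi_1|^{p'}\varphi_1^{-1/(p-1)}\,dx\right)^{1/p'}.
\]
Here the whole point is that the second factor is finite: since $\Omega$ is a bounded $C^2$ domain, elliptic regularity gives $\varphi_1\in C^1(\cnj\Omega)$, so $\nabla\varphi_1\in L^\infty(\Omega)$, and because $p>2$ we have $\alpha:=1/(p-1)\in(0,1)$, whence $\int_\Omega\varphi_1^{-\alpha}\,dx<\infty$ by Lemma \ref{L:alpha}. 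Denoting this finite factor by $K$ and raising the inequality to the power $p$ (using $p/p'=p-1$) yields $\lambda_1^p|\int_\Omega v\varphi_1|^p\le K^{p-1}\int_\Omega|\nabla v|^p\varphi_1$, so that $C_{\Omega,p}=\lambda_1^p K^{-(p-1)}$ works. It is worth noting that the threshold $p=2$ corresponds exactly to the borderline $\alpha=1$, where $\int_\Omega\varphi_1^{-1}\,dx=\infty$ and the argument breaks down; this is consistent with the sharpness claim.

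For the claim that no such constant exists when $1\le p\le 2$, I would exhibit, for each such $p$, a sequence $v_k\in W^{1,p}_0(\Omega)$ along which the ratio $\int_\Omega|\nabla v_k|^p\varphi_1\,dx\big/\bigl|\int_\Omega v_k\varphi_1\,dx\bigr|^p$ tends to $0$. The model case $\Omega=(0,\pi)$, $\varphi_1=\sin$, is transparent and already reveals the mechanism, since $\sin x\approx x$ near the endpoints. I would take boundary-layer functions equal to $1$ away from $\partial\Omega$ and transitioning to $0$ in thin layers near the endpoints, so that $\int_\Omega v_k\varphi_1\,dx$ stays bounded away from $0$ (it converges to $\int_0^\pi\sin x\,dx=2$) while the weighted Dirichlet energy of the layer vanishes. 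For $1\le p<2$ a linear transition of width $\delta$ suffices, giving $\int|v'|^p\varphi_1\,dx\approx\delta^{2-p}\to0$. The critical case $p=2$ is the delicate one: linear cutoffs fail (the energy stays of order $1$), and one must instead use the logarithmic profile $v(x)=\log(x/\eta)/\log(\delta/\eta)$ on $(\eta,\delta)$ with $v\equiv0$ on $(0,\eta)$, dictated by the weighted one-dimensional Euler--Lagrange equation $(x v')'=0$; for this profile $\int|v'|^2\varphi_1\,dx\approx 1/\log(\delta/\eta)\to0$ as $\eta\to0^+$ with $\delta$ fixed, while $v\in W^{1,2}_0$ for each fixed $\eta>0$. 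The general $C^2$ domain then reduces to this computation by defining $v$ as a function of $\text{dist}(x,\partial\Omega)$ in a tubular neighborhood of $\partial\Omega$ and constant in the interior, using the two-sided comparison $\varphi_1\approx\text{dist}(\cdot,\partial\Omega)$ near the boundary (the lower bound from Lemma \ref{L:dist}, the upper bound from $\varphi_1\in C^1(\cnj\Omega)$ with $\varphi_1|_{\partial\Omega}=0$) together with the coarea formula to convert the integral into the one-dimensional model integral.

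The main obstacle is the sharpness part at the critical exponent $p=2$: constructing test functions that simultaneously remain in $W^{1,p}_0(\Omega)$ (for each fixed parameter) and drive the weighted gradient energy to zero requires the logarithmic profile rather than a naive cutoff, and verifying the required asymptotics carefully near $\partial\Omega$ via the coarea formula is where the real work lies. By contrast, the positive direction for $p>2$ is essentially a clean consequence of integration by parts, H\"older, and Lemma \ref{L:alpha}.
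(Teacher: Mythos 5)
Your proof is correct, but the positive direction ($p>2$) follows a genuinely different route from the paper's. The paper bounds $\abs{\int_\Omega v\vphi_1\,dx}$ by $\|\vphi_1\|_{L^\infty}\|v\|_{L^1}$, applies the $L^1$ Poincar\'e inequality to get $\|v\|_{L^1}\leq C_\Omega\|\nabla v\|_{L^1}$, and only then uses the weighted H\"older step $\int_\Omega|\nabla v|\,dx=\int_\Omega|\nabla v|\vphi_1^{1/p}\vphi_1^{-1/p}\,dx\leq\pnt{\int_\Omega|\nabla v|^p\vphi_1\,dx}^{1/p}\pnt{\int_\Omega\vphi_1^{-1/(p-1)}\,dx}^{1-1/p}$. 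You instead integrate by parts against the eigenfunction equation, $\lambda_1\int_\Omega v\vphi_1\,dx=\int_\Omega\nabla v\cdot\nabla\vphi_1\,dx$, and apply the same weighted H\"older splitting directly to that pairing. Both arguments hinge on exactly the same key input --- Lemma \ref{L:alpha} with exponent $1/(p-1)\in(0,1)$, which is where $p>2$ enters --- so the sharpness mechanism is identical; what differs is the auxiliary machinery. Your version trades the Poincar\'e inequality and $\|\vphi_1\|_{L^\infty}$ for the eigenvalue identity and the bound $\|\nabla\vphi_1\|_{L^\infty}<\infty$ (which does hold by elliptic regularity on a $C^2$ domain, and is implicitly used elsewhere in the paper), and it yields a slightly more explicit constant $C_{\Omega,p}=\lambda_1^pK^{-(p-1)}$; the paper's version is marginally more robust in that it never differentiates $\vphi_1$. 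For the negative direction your construction is essentially the paper's: the same logarithmic boundary-layer profile at the critical exponent $p=2$ on $(0,\pi)$ with $\vphi_1=\sin$ (your $\eta=\epsilon^2$, $\delta=\epsilon$ reproduces their $v_\epsilon$ up to normalization by $\log(1/\epsilon)$), the same observation that linear cutoffs suffice for $1\leq p<2$ (the paper defers this case to \cite{Bellout_Benachour_Titi_2003}), and the same reduction of a general domain to the one-dimensional model via the distance function, which the paper likewise only sketches.
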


\begin{proof} For the proof  we follow closely \cite{Souplet_2002}.  Using H\"older's inequality, we have
\begin{align}
\int_{\Omega}|\nabla v|\,dx
&=\int_{\Omega}|\nabla v| \vphi_1^{1/p}\vphi_1^{-1/p}\,dx\notag
\leq\notag
\pnt{\int_{\Omega}|\nabla v|^p\vphi_1\,dx}^{1/p} \pnt{\int_{\Omega}\vphi_1^{-1/(p-1)}\,dx}^{1-1/p}
\\&\leq
\label{holder}
C_{\Omega,p}'\pnt{\int_{\Omega}|\nabla v|^p\vphi_1\,dx}^{1/p},
\end{align}
where, due to Lemma \ref{L:alpha},
\[C_{\Omega,p}':= \pnt{\int_{\Omega}\vphi_1^{-1/(p-1)}\,dx}^{1-1/p}<\infty,\]
since $p>2$ implies $\frac{1}{p-1}\in(0,1)$.

Since $\Omega$ is bounded, we have by Poincar\'e's inequality \eqref{poincare}, that there exists a constant $C_\Omega$ such that
\begin{equation}\label{BC_Paper_poincare}
\int_\Omega|v(x,t)|\,dx
\leq
C_\Omega\int_\Omega|\nabla v(x,t)|\,dx.
\end{equation}
Therefore, \eqref{holder} and \eqref{BC_Paper_poincare} give
\begin{align}
\label{z_est}
\abs{\int_\Omega v(t,x)\vphi_1(x)\,dx}^p
&\leq
\pnt{\|\vphi_1\|_{L^\infty}\int_\Omega |v(t,x)|\,dx}^p
\\&\leq\notag
\pnt{\|\vphi_1\|_{L^\infty}
C_\Omega\int_\Omega|\nabla v(x,t)|\,dx}^p
\\&\leq\notag
\Big(\|\vphi_1\|_{L^\infty} C_\Omega
C_{\Omega,p}'\Big)^p
\int_{\Omega}|\nabla v(x,t)|^p \vphi_1(x)\,dx,
\end{align}
Setting $C_{\Omega,p}=\pnt{\|\vphi_1\|_{L^\infty} C_\Omega
C_{\Omega,p}'}^{-p}$ yields \eqref{poincare_phi_1}.

We next give a counterexample to show that \eqref{poincare_phi_1} cannot hold for all $v\in W^{1,p}(\Omega)$ in the case $p=2$.  Here, for simplicity, we only show the one-dimensional case, with $\Omega=(0,2\pi)$, since similar counterexamples can be constructed in higher dimensional cases based on the one-dimensional case, using the fact that the domain satisfies the interior ball condition and comparing to the distance function, as in the proof of Lemma \ref{L:alpha}.

For $\epsilon\in(0,\pi/4)$,
consider the function $v_\epsilon$ defined on $[0,\pi]$, given by
\[
   v_\epsilon(x):=
   \begin{cases}
      0 &\text{ for } x\in[0,\epsilon^2]\cup [\pi-\epsilon^2,\pi],
      \\
      \log(x/\epsilon^2) &\text{ for } x\in[\epsilon^2,\epsilon],
      \\
      \log(1/\epsilon) &\text{ for } x\in[\epsilon,\pi-\epsilon],
      \\
      \log((\pi-x)/\epsilon^2) &\text{ for } x\in[\pi-\epsilon,\pi-\epsilon^2].
   \end{cases}
\]
We calculate the derivative
\[
   v_\epsilon'(x)=
   \begin{cases}
      0 &\text{ for } x\in(0,\epsilon^2)\cup(\epsilon,\pi-\epsilon)\cup (\pi-\epsilon^2,\pi),
      \\
      1/x &\text{ for } x\in(\epsilon^2,\epsilon),
      \\
      1/(x-\pi) &\text{ for } x\in(\pi-\epsilon,\pi-\epsilon^2).
   \end{cases}
\]
Notice that $v_\epsilon\in W^{1,2}_0((0,\pi))$.  Furthermore, using  the fact that $\sin(x)\geq \frac{2}{\pi}x$ on $[0,2/\pi]$ and that $0< \epsilon< \pi/4$ we have
\begin{align*}
   &\int_0^\pi v_\epsilon(x)\sin(x)\,dx
   =
   2\int_{0}^{\pi/2} v_\epsilon(x)\sin(x)\,dx
   \geq
   \frac{4}{\pi}\int_0^{\pi/2} v_\epsilon(x)x\,dx
   \\\geq&
   \frac{4}{\pi}\int_{\epsilon}^{\pi/2} \log\pnt{\frac{1}{\epsilon}}x\,dx
   =
   \frac{2}{\pi}\log\pnt{\frac{1}{\epsilon}}\pnt{\frac{\pi^2}{4}-\epsilon^2}
   >
   \frac{3\pi}{8}\log\pnt{\frac{1}{\epsilon}},
\end{align*}
Thus,
\begin{align*}
   \pnt{\int_0^\pi v_\epsilon(x)\sin(x)\,dx}^2
   >
   \frac{9\pi^2}{64}\pnt{\log\pnt{\frac{1}{\epsilon}}}^2.
\end{align*}
On the other hand, notice that
\begin{align*}
   \int_0^\pi (v_\epsilon'(x))^2\sin(x)\,dx
   &=2\int_{\epsilon^2}^\epsilon\pnt{\frac{1}{x}}^2\sin(x)\,dx
   \leq
   2\int_{\epsilon^2}^\epsilon\pnt{\frac{1}{x}}^2x\,dx
   =2\log\pnt{\frac{1}{\epsilon}}.
\end{align*}
Taking ratios of the above inequalities, we observe
\begin{align*}
   \frac{\pnt{\int_0^\pi v_\epsilon(x)\sin(x)\,dx}^2}{\int_0^\pi (v_\epsilon'(x))^2\sin(x)\,dx}
   \geq
   \frac{\frac{9\pi^2}{64}\pnt{\log\pnt{\frac{1}{\epsilon}}}^2}{2\log\pnt{\frac{1}{\epsilon}}}
   =\frac{9\pi^2}{128}\log\pnt{\frac{1}{\epsilon}}\maps\infty
\end{align*}
as $\epsilon\maps 0^+$, and therefore no finite number $C>0$ can be chosen to make \eqref{poincare_phi_1} true for all functions $v\in W^{1,2}_0((0,\pi))$.
\end{proof}

\begin{remark}
  The counterexample for $p=2$ we believe to be new.  A counterexample, based on a piecewise linear function, was given in the case $1\leq p<2$, for  $n=1$, in \cite{Bellout_Benachour_Titi_2003}.
\end{remark}

With the above lemmas in hand, we are now ready to prove the main theorem for blow-up of \eqref{VHJE} under homogeneous Dirichlet boundary conditions.  As mentioned earlier, the proof is very similar to the one given in \cite{Souplet_2002}, where it is also given in greater generality.  The proof is given here for the sake of completeness.


\begin{theorem}\label{T:blowup}
Let $p>2$ and suppose $u_0\in C^2(\Omega)\cap L^\infty(\Omega)$.  There exists $K>0$, given by equation \eqref{Souplet_Condition_Constant} below, such that if
\begin{align}\label{Souplet_Condition}
\int_\Omega u_0(x)\vphi_1(x)\,dx \geq K,
\end{align}
then any solution to \eqref{VHJE}, taken with homogeneous Dirichlet boundary conditions and initial data $u_0$, develops a singularity in finite time.
\end{theorem}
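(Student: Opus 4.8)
The plan is to track the projection of the solution onto the first Dirichlet eigenfunction $\vphi_1$ and to show that this scalar quantity obeys a Bernoulli-type differential inequality that forces it to blow up in finite time. For a solution $u$ of \eqref{VHJE} on its maximal interval of existence $[0,T^*)$, I would set
\[
   z(t):=\int_\Omega u(t,x)\vphi_1(x)\,dx,
\]
which is well defined and differentiable in $t$ thanks to the regularity furnished by Theorem \ref{t:VHJE_Dirichlet_short_time}.

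First I would differentiate $z$ and substitute \eqref{VHJE_eqn}, giving $z'(t)=\int_\Omega(\triangle u)\vphi_1\,dx+\int_\Omega|\nabla u|^p\vphi_1\,dx$. For the linear term I integrate by parts twice; since both $u(t,\cdot)$ and $\vphi_1$ vanish on $\partial\Omega$, the boundary contributions $\int_{\partial\Omega}(\vphi_1\,\partial_\nu u-u\,\partial_\nu\vphi_1)\,dS$ both vanish, and using $-\triangle\vphi_1=\lambda_1\vphi_1$ this term reduces exactly to $-\lambda_1 z(t)$. For the superlinear term I invoke the weighted inequality of Lemma \ref{l:poincare_phi_1}, applied to $v=u(t,\cdot)\in W^{1,p}_0(\Omega)$, to bound it below by $C_{\Omega,p}\abs{z(t)}^p$. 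Combining these yields
\[
   z'(t)\geq C_{\Omega,p}\,\abs{z(t)}^p-\lambda_1 z(t).
\]

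Next I would fix the threshold $K$ of \eqref{Souplet_Condition_Constant} so that the superlinear term dominates the linear dissipation, namely $K:=(2\lambda_1/C_{\Omega,p})^{1/(p-1)}$; the hypothesis \eqref{Souplet_Condition} then guarantees $z(0)\geq K>0$. A short comparison argument shows $z$ remains above $K$: whenever $z\geq K$ one has $C_{\Omega,p}z^{p-1}\geq 2\lambda_1$, hence $\lambda_1 z\leq\tfrac12 C_{\Omega,p}z^p$ and therefore $z'\geq\tfrac12 C_{\Omega,p}z^p>0$, so $z$ is increasing and stays $\geq K$ (in particular positive, so $\abs{z}^p=z^p$). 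Integrating the Bernoulli inequality $z'\geq\tfrac12 C_{\Omega,p}z^p$ with $p>1$ gives $z^{1-p}(t)\leq z(0)^{1-p}-\tfrac{(p-1)C_{\Omega,p}}{2}\,t$, whence $z(t)\to\infty$ as $t$ approaches the finite time $T_0:=\tfrac{2\,z(0)^{1-p}}{(p-1)C_{\Omega,p}}$. If the solution were regular on all of $[0,T_0]$, then $z$ would remain finite there, contradicting this blow-up; hence $T^*\leq T_0<\infty$ and the solution must lose regularity by time $T_0$.

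I expect the main points requiring care to be twofold. First, the justification that the regularity from Theorem \ref{t:VHJE_Dirichlet_short_time} suffices to differentiate $z$, to integrate by parts legitimately, and to apply Lemma \ref{l:poincare_phi_1} throughout the interval of existence. Second, the bookkeeping that the dissipative term $-\lambda_1 z$ is genuinely absorbed by the superlinear term once $z\geq K$, which is exactly what pins down the explicit constant $K$ in \eqref{Souplet_Condition_Constant}. The analytic heart of the argument — the weighted Poincar\'e-type inequality producing the $\abs{z}^p$ lower bound — has already been isolated in Lemma \ref{l:poincare_phi_1}, so the remaining labor is the ODE comparison together with the regularity justification.
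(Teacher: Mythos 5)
Your proposal is correct and follows essentially the same route as the paper: the same eigenfunction projection $z(t)=\int_\Omega u\,\vphi_1\,dx$, the same integration by parts yielding $z'+\lambda_1 z\geq C_{\Omega,p}|z|^p$ via Lemma \ref{l:poincare_phi_1}, and the same threshold $K=(2\lambda_1/C_{\Omega,p})^{1/(p-1)}$. The only (immaterial) difference is the final ODE step, where you absorb the linear term into half of the superlinear one and integrate a Bernoulli inequality, whereas the paper uses the integrating factor $y(t)=e^{\lambda_1 t}z(t)$; both give a finite blow-up time for $z$ and hence the same contradiction.
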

%

\begin{proof}
By Theorem \ref{t:VHJE_Dirichlet_short_time}, we know that there exists a time $T>0$ and a unique solution $u\in C^{1+\alpha}([0,T]\times\Omega)\cap C([0,T]\times\cnj{\Omega})$ to \eqref{VHJE} satisfying $u\equiv0$ on $\partial\Omega$.
Let $T^*>0$  be the maximal  time of existence of the solution to \eqref{VHJE}. If   $T^*< \infty$ then there is nothing to prove. Therefore, we assume by contradiction that $T^* =\infty$. Following \cite{Souplet_2002}, let
\begin{align}
z(t)=\int_\Omega u(t,x)\vphi_1(x)\,dx.                                                                                                                        \end{align}
The use of $z(t)$ will allow us to use standard non-existence results for ODEs, exploiting properties of $\vphi_1$.
Integrating by parts, we calculate in $(0,T^*)$,
\begin{align}
z'(t)+\lambda_1z(t)
&=\label{z1}
\int_\Omega u_t(t,x)\vphi_1(x)\,dx
-\int_\Omega u(t,x)\triangle \vphi_1(x)\,dx
\\&=\notag
\int_\Omega \big(u_t(t,x)
-\triangle u(t,x)\big)\vphi_1(x)\,dx
=\int_\Omega |\nabla u|^p\vphi_1(x)\,dx.
\end{align}
Applying Lemma \ref{l:poincare_phi_1} to \eqref{z1} gives
\begin{equation*}
z'(t)+\lambda_1z(t)
\geq
C_{\Omega,p}(z(t))^p.
\end{equation*}
Now, if
\begin{align}\label{Souplet_Condition_Constant}
   z(0)\geq\pnt{2\lambda_1/C_{\Omega,p}}^{1/(p-1)} =: K,
\end{align}
then the above estimate implies that $z'(t)\geq 0$ for a short interval of time, and thus $z(t)\geq K$ for all $t\geq0$. Let $y(t):=e^{\lambda_1 t}z(t)$.  Notice that $y(0)=z(0)$.  We then have
\begin{align*}
   y'(t)\geq C_{\Omega,p}e^{\lambda_1 (1-p)t}(y(t))^p
\end{align*}
Integrating, we obtain
\begin{align*}
   (y(0))^{1-p}-(y(t))^{1-p}
   \geq
   C_{\Omega,p}\lambda_1^{-1}(1-e^{\lambda_1 (1-p)t}).
\end{align*}
Thus,
\begin{align}\label{KPZ_y_lower_bound}
   (y(t))^{p-1}
   \geq
   \pnt{(z(0))^{1-p}-C_{\Omega,p}\lambda_1^{-1}(1-e^{\lambda_1 (1-p)t})}^{-1}.
\end{align}
Now, since $z(0)\ge K$, then the right-hand side of \eqref{KPZ_y_lower_bound} become infinite at finite time $t = T^{**}$, where $e^{\lambda_1 (1-p)T^{**}}=1/2$. Hence $T^* \leq T^{**}$, which contradicts the assumption that $T^* =\infty$. In particular, we have shown that a singularity of $u$ develops in finite time.
\end{proof}

\begin{remark}
Regarding the previous theorem, note that, by the maximum principle, since $u\equiv 0$ on $\partial\Omega$,  the extreme values must occur at the initial time, so that
\begin{equation}
 \sup_{(x,t)\in\Omega\times[0,T^*)}|u(x,t)|
\leq
\|u_0\|_{L^\infty}
<\infty.
\end{equation}
Thus, since  the solution ceases to exist after finite time, it must do so in a norm other than $L^\infty$.
\end{remark}

\begin{remark}\label{r:p.lt.2}
   In the previous theorem, the condition $p>2$ is sharp, since \eqref{VHJE} has global existence in the Dirichlet case when $p\leq 2$, (see, e.g., \cite{Ferrari_Titi_1998,Gilding_Guedda_Kersner_2003,Gilding_2005}).  Furthermore, one can see that the reason why the proof of Lemma \ref{T:blowup} fails is because inequality \eqref{poincare_phi_1} fails in this case.
\end{remark}



Let us conclude  by remarking that in this section, we have seen that in the Dirichlet case, if one chooses smooth initial data, say $u_0\in C^{1+\alpha}(\Omega)$ such that $\int_\Omega u_0\vphi_1\,dx$ is sufficiently large, then the solution to \eqref{VHJE} will blow up in finite time.  However, in the case of periodic boundary conditions, specifying that $u_0\in H^1(\nT)$ (in fact, as shown in \cite{Gilding_Guedda_Kersner_2003,Gilding_2005}, one only needs $u_0\in C(\nT^n)$), the solution to \eqref{VHJE} will exist globally in time.  Similar results hold in full space, if one assumes, e.g., that $u_0\in C(\nR^n)\cap L^\infty(\nR^n)$ (see, e.g., \cite{Gilding_Guedda_Kersner_2003,Gilding_2005}).  Thus, it may be the case that computational and analytic searches for blow-up in more complicated situations (e.g., the Navier-Stokes and Euler equations) might not provide evidence for blow-up, due to the fact that full-space or periodic boundary conditions essentially neglect any effects of the boundary, even if blow-up does occur in the case of physical boundary conditions.

\section{Inferences From Adjusting Boundary Conditions}\label{sec_KS_BC_Pokhozhaev}

In this section, we consider the claims that can be made with reference to certain boundary conditions.  Previously, we saw that changing from periodic boundary conditions to physical (e.g., Dirichlet-like) boundary conditions could determine whether or not a solution is globally well-posed.  Next, we will show an example in which a problem, given by the Kuramoto-Sivashinsky equations, is, in the one-dimensional case, globally well-posed under periodic, full-space, or Neumann-like boundary conditions, but loses its global regularity, in any dimension,  under another set of boundary conditions given by \eqref{KS_BC_Pokhozhaev}, below.  That is to say, in certain settings, one may engineer certain (possibly non-physical) boundary conditions to force finite-time blow-up of to occur.

We consider the Kuramoto-Sivashinsky equations, given by
\begin{subequations}\label{KS}
\begin{align}\label{KS_eqn}
   u_t + \triangle^2u+\triangle u +\tfrac{1}{2}|\nabla u|^2&=0 && \text{ in }\Omega\times(0,T),
   \\\label{KS_init}
   u(x,0) &=u_0(x) && \text{ in }\Omega.
\end{align}
\end{subequations}
This form of the Kuramoto-Sivashinsky equations is sometimes called the integrated version of the Kuramoto-Sivashinsky equations.
Here, we consider $\Omega\subset\nR^n$ to be a smooth domain.  We will discuss several variations on the boundary conditions below, as these are the major focus of this section.   Currently, even in the one-dimensional case, the question of global existence of solutions to \eqref{KS} under the physical Dirichlet-like boundary conditions
\begin{align}\label{KS_BC_Physical}
   u \equiv \triangle u \equiv 0 \text{ on }\partial\Omega,
\end{align}
is still open.   Moreover, for $n\geq2$, the question of global well-posedness of \eqref{KS} in the periodic case, or in $\nR^n$ is also an open challenging question.

As it turns out, dealing with the spatial average of the solution can be the main obstacle in showing global regularity for \eqref{KS}, \eqref{KS_diff}, and to avoid this issue, many authors set $\bv = \nabla u$, and consider instead the differentiated version of \eqref{KS}, i.e., the system
\begin{subequations}\label{KS_diff}
\begin{align}\label{KS_diff_eqns}
   \bv_t + \triangle^2\bv+\triangle \bv +(\bv\cdot\nabla) \bv&= 0&& \text{ in }\Omega\times(0,T),
   \\\label{KS_diff_init}
   \bv(x,0) &=\bv_0(x):=\nabla u_0 (x) && \text{ in }\Omega.
\end{align}
\end{subequations}
It is well-known that in the one-dimensional case, with either periodic
($\Omega=\nT:=\nR/\nZ$) or full-space ($\Omega=\nR$) boundary conditions,
\eqref{KS_diff} is globally well-posed, and in the periodic case, has a finite-dimensional global attractor and an inertial manifold (see, e.g., \cite{Tadmor_1986,Robinson_2001,Temam_1997_IDDS,Ilyashenko_1992,Collet_Eckmann_Epstein_Stubbe_1993_Attractor,Goodman_1994,Foias_Nicolaenko_Sell_Temam_1985,Constantin_Foias_Nicolaenko_Temam_1989,Constantin_Foias_Nicolaenko_Temam_1989_IM_Book,Foias_Sell_Titi_1989,Foias_Sell_Temam_1985} and the references therein).  It was shown in \cite{Cao_Titi_2006_KSE} that the only steady-state solutions to \eqref{KS} in either $\nR^n$ or $\nT^n$, $n=1,2$, are constant functions.  The question of the global well-posedness of \eqref{KS} for $n\geq 2$ in the periodic case, or $\nR^n$ is still open.  There have been partial results in bounded domains in dimension $n\geq2$, assuming special geometries.  For instance, global well-posedness for \eqref{KS} in dimension $n=2,3$ was shown in \cite{Bellout_Benachour_Titi_2003} for the case of radially symmetric initial data, in an annular domain $\Omega=\set{x : 0<r<|x|<R}$, where $r,R$ are fixed positive numbers, and the Neumann boundary conditions $\partial_r u = \partial_r\triangle u =0$ on $\partial\Omega$ are imposed.  However, the general case is currently an outstanding open problem.

One can also consider a generalization of \eqref{KS_eqn}, namely
\begin{align*}
   u_t + \triangle^2u+\triangle u +\tfrac{1}{2}|\nabla u|^p=0,
\end{align*}
for some $p\geq0$.  This equation was considered in \cite{Bellout_Benachour_Titi_2003}, where it was shown that when $p>2$, under the boundary conditions \eqref{KS_BC_Physical}, a singularity develops in finite-time, provided that the initial data is sufficiently large in a certain sense, similar to \eqref{Souplet_Condition}.  (In fact, in \cite{Bellout_Benachour_Titi_2003}, the authors proved an even stronger result, as they did not need the destabilizing term $\triangle u$.) The result and proof are similar in character to that of Theorem \ref{T:blowup}, although care needs to be taken due to the fact that one no longer has a maximum principle.

Recently, in \cite{Pokhozhaev_2008}, it has been shown that, for any dimension, a finite-time singularity will develop in solutions to \eqref{KS} for a certain class of initial conditions, if one imposes the boundary conditions
\begin{align}\label{KS_BC_Pokhozhaev}
   u =0, \qquad \pd{}{\nu}(u+\triangle u) =0  \text{ on } \partial\Omega\times(0,T).
\end{align}
The blow-up can be shown by the following calculation, which occurs in \cite{Pokhozhaev_2008} (see also \cite{Galaktionov_Mitidieri_Pokhozhaev_2008}).  Integrating equation \eqref{KS_eqn} in space and using the divergence theorem with boundary conditions \eqref{KS_BC_Pokhozhaev}, the Poincar\'e inequality, and the Cauchy-Schwarz inequality, we find
\begin{align*}
   \fd{}{t}\int_\Omega u\,dx
   &=
   \int_\Omega (-\triangle^2 u-\triangle u + \tfrac{1}{2}|\nabla u|^2)\,dx
   =
   \frac{1}{2}\int_\Omega |\nabla u|^2\,dx
   \\&\geq
   \frac{\lambda_1}{2}\int_\Omega |u|^2\,dx
   \geq
   \frac{\lambda_1}{2|\Omega|}\pnt{\int_\Omega u\,dx}^2.
\end{align*}
Gr\"onwall's inequality then yields
\begin{align*}
   \int_\Omega u(t,x)\,dx
   \geq
   \pnt{1-\frac{\lambda_1t}{2|\Omega|}\int_\Omega u_0(x)\,dx}^{-1}\int_\Omega u_0(x)\,dx.
\end{align*}
Thus, if we choose the initial data such that
\begin{align*}
   \int_\Omega u_0\,dx>0,
\end{align*}
the solution will blow up at least by time $T^*=\frac{2|\Omega|}{\lambda_1}\pnt{\int_\Omega u_0(x)\,dx}^{-1}<\infty$.

Thus, we have seen that one can impose boundary conditions, namely conditions  \eqref{KS_BC_Pokhozhaev}, to cause the solution of \eqref{KS} to blow up for certain initial data.  However, we observe again that  problem \eqref{KS} under boundary conditions \eqref{KS_BC_Physical} still remains open.   Furthermore, we show below that if one imposes somewhat looser boundary conditions than \eqref{KS_BC_Pokhozhaev}, one can show that the solution does not blow up in finite time, at least in the one-dimensional case.

\begin{theorem}
   Consider the one-dimensional version of \eqref{KS} on the domain $\Omega=(0,L)\subset\nR$, with the boundary conditions
   \begin{align}\label{KS_Titi_BC}
      u_x(0)=u_{xxx}(0)=u_x(L)=u_{xxx}(L)=0.
   \end{align}
   Given $T>0$ and $u_0\in H^1((0,L))$ satisfying \eqref{KS_Titi_BC}, there exists a solution $u$ of \eqref{KS} such that $u\in L^\infty([0,T];L^2((0,L)))\cap L^2([0,T];H^3((0,L)))$ to \eqref{KS}.  Furthermore, this solution is unique, and is Gevrey regular in space for $t>0$.
\end{theorem}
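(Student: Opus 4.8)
The plan is to pass to the differentiated equation and work with $v := u_x$, for which the standard Kuramoto--Sivashinsky energy estimates are available, and then to recover $u$ itself by controlling its spatial average. Differentiating \eqref{KS_eqn} in the one-dimensional case gives $v_t + v_{xxxx} + v_{xx} + v v_x = 0$, and the boundary conditions \eqref{KS_Titi_BC} translate into the hinged (Navier) conditions $v = v_{xx} = 0$ at $x=0,L$. These are precisely the conditions under which the eigenfunctions $\sin(k\pi x/L)$ diagonalize both $-\partial_x^2$ and $\partial_x^4$, so I would set up a Galerkin scheme for $v$ in this sine basis (equivalently, a cosine-basis Galerkin scheme for $u$, which automatically respects \eqref{KS_Titi_BC}). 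Short-time existence of the Galerkin system is immediate, and it remains to produce uniform a priori bounds.

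For the a priori estimates I would take the $L^2$ inner product of the $v$-equation with $v$. The hinged boundary conditions make all boundary terms vanish and kill the nonlinear contribution, since $\int_0^L v^2 v_x\,dx = \tfrac13[v^3]_0^L = 0$, leaving $\tfrac12\tfrac{d}{dt}\|v\|_{L^2}^2 + \|v_{xx}\|_{L^2}^2 = \|v_x\|_{L^2}^2$. The destabilizing term is absorbed by interpolation, $\|v_x\|_{L^2}^2 = -\int_0^L v v_{xx}\,dx \le \tfrac12\|v_{xx}\|_{L^2}^2 + \tfrac12\|v\|_{L^2}^2$, yielding $\tfrac{d}{dt}\|v\|_{L^2}^2 + \|v_{xx}\|_{L^2}^2 \le \|v\|_{L^2}^2$; Grönwall then gives $v \in L^\infty([0,T];L^2)\cap L^2([0,T];H^2)$ uniformly in the Galerkin dimension. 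Using a bound on $v_t$ in a negative-order space read off from the equation (the nonlinear term being controlled by one-dimensional interpolation) together with the Aubin--Lions lemma, I would pass to the limit and obtain a solution $v$ of the differentiated problem.

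The crucial step, and the one the boundary conditions \eqref{KS_Titi_BC} are tailored to, is the reconstruction of $u$. Integrating \eqref{KS_eqn} over $\Omega$ and using \eqref{KS_Titi_BC} to discard the $u_{xxxx}$ and $u_{xx}$ fluxes gives $\tfrac{d}{dt}\int_\Omega u\,dx = -\tfrac12\int_\Omega v^2\,dx$, which has the favorable sign --- in contrast to the Pokhozhaev conditions \eqref{KS_BC_Pokhozhaev}, where the same computation produces $+\tfrac12\int_\Omega |\nabla u|^2\,dx$ and forces blow-up. Since $\|v\|_{L^2}^2$ is integrable in time, this ODE determines a bounded mean $\bar u(t)$ with $\bar u(0)=|\Omega|^{-1}\int_\Omega u_0\,dx$, and setting $u(x,t) = \bar u(t) + \int_0^x v\,dy - |\Omega|^{-1}\int_0^L\int_0^x v\,dy\,dx$ produces a function solving \eqref{KS}: integrating $v_t + v_{xxxx} + v_{xx} + vv_x = 0$ in $x$ shows that $u$ satisfies \eqref{KS_eqn} up to a spatially constant defect $g(t)$, and $|\Omega|\,g(t) = \tfrac{d}{dt}\int_\Omega u\,dx + \tfrac12\|v\|_{L^2}^2$ vanishes precisely by the above choice of $\bar u$. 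With the mean controlled, the Poincaré--Wirtinger inequality upgrades the seminorm bounds on $v$ to $u \in L^\infty([0,T];L^2)\cap L^2([0,T];H^3)$.

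Uniqueness follows from an energy estimate for the difference $\delta = v^{(1)}-v^{(2)}$ of two solutions, where the nonlinearity contributes $\tfrac14\int_\Omega (v^{(1)}+v^{(2)})_x\,\delta^2\,dx$, controlled by $\|(v^{(1)}+v^{(2)})_x\|_{L^\infty}\in L^2_t$ (via $H^2\hookrightarrow W^{1,\infty}$ in one dimension); Grönwall forces $\delta\equiv 0$, and equality of the means then gives $u^{(1)}=u^{(2)}$. Finally, for Gevrey regularity I would reduce to the periodic case: the odd $2L$-periodic reflection of $v$ is admissible because $v$, $v_{xx}$, and (using the equation at the endpoints) $v_{xxxx}$ all vanish on $\partial\Omega$, so the extension is a smooth periodic solution of the differentiated equation, to which the analyticity machinery of \cite{Ferrari_Titi_1998,Foias_Temam_1989} (as invoked in Theorem \ref{t:KPZ_p=4_local}) applies directly; restricting back gives $v(\cdot,t)$, hence $u(\cdot,t)$, Gevrey regular for $t>0$. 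I expect the main obstacle to be not the energy estimates --- which are by now standard for the differentiated one-dimensional equation --- but the reconciliation of those differentiated-variable bounds with the recovery of the full solution $u$, i.e. the control of the spatial average; here, fortunately, the boundary conditions supply the correct sign, and the only genuinely technical point is the Gevrey estimate, which is handled by the cited machinery.
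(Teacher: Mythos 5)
Your proposal is correct and follows essentially the same route as the paper: the $H^1$-level energy estimate with the vanishing cubic term and the interpolation $\|u_{xx}\|_{L^2}^2\le\|u_x\|_{L^2}\|u_{xxx}\|_{L^2}$, the recovery of the spatial mean from $\frac{d}{dt}\int_\Omega u\,dx=-\tfrac12\|u_x\|_{L^2}^2$ followed by Poincar\'e, and the Gevrey regularity via the odd periodic extension of $v=u_x$ (which is exactly the paper's concluding remark). The only cosmetic difference is that you phrase the estimates in the differentiated variable $v=u_x$ with hinged boundary conditions and then reconstruct $u$, rather than pairing the $u$-equation directly with $-u_{xx}$; the computations are identical.
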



\begin{proof}
   We give only a formal existence proof here, but we remark that the proofs can be made rigorous by using, e.g., the Galerkin procedure.  First, we notice that one can show the short-time existence by using the Galerkin procedure based on the eigenfunctions of $-\partial_{xx}$ with Neumann boundary conditions $u_x=0$ for $x=0,L$ (i.e., functions of the form $\cos(\pi k x /L)$.  The proof of this is similar to standard proofs in periodic boundary conditions.  We refer to   \cite{Robinson_2001,Temam_1997_IDDS} for a demonstration of this method, and also a proof of uniqueness.
   Furthermore, as in \cite{Foias_Temam_1989,Ferrari_Titi_1998,Liu_1991_KSE_Gevrey,Collet_Eckmann_Epstein_Stubbe_1993_Analyticity}, one can show that the solutions are Gevrey regular (analytic) in space for $t>0$.
   It remains to show that the solution remains bounded for all time.

   Formally taking the $L^2$ inner-product of \eqref{KS_eqn} with $-u_{xx}$ and integrating by parts, we find
   \begin{align*}
      \frac{1}{2}\frac{d}{dt}\|u_x\|_{L^2}^2
     +
     \|u_{xxx}\|_{L^2}^2
     =
     \|u_{xx}\|_{L^2}^2
     +
     \int_0^L u_x^2u_{xx}\,dx.
   \end{align*}
Notice that $\int_0^L u_x^2u_{xx}\,dx = \int_0^L \frac{1}{3}\partial_x(u_x)^3\,dx
=0$.  Furthermore, integrating by parts and using the Cauchy-Schwarz inequality, we have
\begin{align}\label{KS_interpolation}
\|u_{xx}\|_{L^2}^2 = -\int_0^L u_xu_{xxx}\,dx\leq \|u_x\|_{L^2}\|u_{xxx}\|_{L^2}.\end{align}
  Young's inequality gives $\|u_x\|_{L^2}\|u_{xxx}\|_{L^2}\leq \frac{1}{2}\|u_x\|_{L^2}^2+\frac{1}{2}\|u_{xxx}\|_{L^2}^2$.  Combining these estimates, we have
\begin{align}\label{KS_est_u_x}
\frac{d}{dt}\|u_x\|_{L^2}^2
     +
     \|u_{xxx}\|_{L^2}^2 \leq \|u_x\|_{L^2}^2
\end{align}
Dropping the term $\|u_{xxx}\|_{L^2}^2$ and using Gr\"onwall's inequality, we find
\begin{align*}
   \|u_x(t)\|_{L^2}^2\leq e^t\|u_x(0)\|_{L^2}^2,
\end{align*}
so that $u_x\in L^\infty([0,T];L^2(0,L))$.  Moreover, integrating \eqref{KS_est_u_x} on $[0,t]$, $t\leq T$, we have
\begin{align*}
   \|u_x(t)\|_{L^2}^2
     +
     \int_0^t\|u_{xxx}(s)\|_{L^2}^2\,ds
     &\leq
     \|u_x(0)\|_{L^2}^2
     +
     \int_0^t\|u_x\|_{L^2}^2\,ds
     \\&\leq
     \|u_x(0)\|_{L^2}^2
     +
     \int_0^t e^s\|u_x(0)\|_{L^2}^2\,ds
     =
     e^t\|u_x(0)\|_{L^2}^2.
\end{align*}
Thus, $u_{xxx}\in L^2([0,T];L^2(0,L))$.  Using \eqref{KS_interpolation} and the above estimates, we find that $u_{xx}\in L^4([0,T];L^2)\subset L^2([0,T],L^2)$.  In order to show  $u\in  L^2([0,T],H^3)$, it remains to prove $u\in  L^2([0,T],L^2)$.  (Note that this does not follow directly from the Poincar\'e inequality, due to the boundary conditions \eqref{KS_Titi_BC}.)

Integrating \eqref{KS_eqn} over $[0,L]$, we find
\begin{align}\label{KS_u_av_eqn}
   \frac{d}{dt}\int_0^L u\,dx
   =
   -\int_0^L u_{xxxx}\,dx-\int_0^L u_{xx}\,dx
   -\int_0^L u_{x}^2\,dx
   = -\|u_{x}\|_{L^2}^2.
\end{align}
Denoting $\cnj{\vphi}:=\int_0^L\vphi(x)\,dx$, and integrating \eqref{KS_u_av_eqn} on $[0,t]$, $t\leq T$, we have
\(
   \cnj{u}(t) = \cnj{u}(0) - \int_0^t \|u_{x}(s)\|_{L^2}^2\,ds,
\)
so that
\begin{align*}
   |\cnj{u}(t)|\leq |\cnj{u}(0)|+ \int_0^t \|u_{x}(s)\|_{L^2}^2\,ds
   \leq
   |\cnj{u}(0)|+  (e^t-1)\|u_x(0)\|_{L^2}^2.
\end{align*}
Thus, $\cnj{u}\in L^\infty([0,T])$.  Now, by the Poincar\'e inequality, we have
\begin{align*}
   \|u\|_{L^2(0,L)}-L^{1/2}|\cnj{u}|
   =
   \|u\|_{L^2(0,L)}-\|\cnj{u}\|_{L^2(0,L)}
   \leq
   \|u-\cnj{u}\|_{L^2(0,L)}
   \leq
   C\|u_x\|_{L^2(0,L)}.
\end{align*}
Combining the above estimates, we find
\begin{align*}
   \|u(t)\|_{L^2(0,L)}
   &\leq
   L^{1/2}|\cnj{u}(t)|+C\|u_x(t)\|_{L^2(0,L)}
   \\&\leq
   L^{1/2}\pnt{|\cnj{u}(0)|+  (e^t-1)\|u_x(0)\|_{L^2}^2}+Ce^{t/2}\|u_x(0)\|_{L^2}.
\end{align*}
Thus, we have $u\in L^\infty([0,T];L^2)$, and therefore from the above estimates, it follows that $u\in L^2([0,T];H^3)$, where the bound depends only upon $|\cnj{u}(0)|,\|u_x(0)\|_{L^2},T$, and $L$.  Thus, the solution can be extended globally in time.  In particular, $u_x\in L^2((0,T),H^3)\subset L^2((0,T),L^\infty)$.
\end{proof}

\begin{remark}
   Observe that in the previous proof, one can see that $v:=u_x$ satisfies \eqref{KS_diff} in the one-dimensional case, with $v(0)=v(L)=v_{xx}(0)=v_{xx}(L)=0$.  Furthermore, notice that, by extending $v$ as an odd function on $[-L,L]$, this is equivalent to the case with periodic boundary conditions on $[-L,L]$, where the functions are restricted to be odd functions, where it is well-known that one has global well-posedness and Gevrey regularity, as studied, e.g., in \cite{Tadmor_1986,Robinson_2001,Temam_1997_IDDS,Ilyashenko_1992} and the references therein.
\end{remark}

\section{Is Hyper-viscosity Stabilizing?}\label{Hyperviscosity}

In many numerical simulations, especially for geophysical flows, a hyper-viscosity term of the form $(-\triangle)^\alpha$, with $\alpha >1$, is used to stabilize the underlying numerical scheme. In the presence of physical boundaries, such as in ocean dynamics models, these artificial hyper-viscosity operators require  additional {\it non-physical} boundary conditions. Even if we set aside this issue with the artificial boundary conditions, we are still faced with the question: is hyper-viscosity is always a stabilizing mechanism? To make our point we consider, for example, the 2D and 3D differentiated form of the viscous Burgers equations:
\begin{equation} \label{vBurgers}
\frac{\partial u}{\partial t} - \nu \triangle u + u \cdot \nabla u =0,
\end{equation}
subject to periodic boundary conditions. On the one hand, and as it was observed in the section \ref{sec_Periodic_vs_Dirichlet}, system \eqref{vBurgers} is globally well-posed for initial data, thanks to the maximum principle, namely, $\|u(\cdot,t)\|_{L^\infty} \leq \|u_0\|_{L^\infty}$, for all $t\ge 0$ (see, e.g., \cite{Ladyzhenskaya_1968}). On the other hand, if one adds a hyper-viscosity term to \eqref{vBurgers}, and consider instead
\begin{equation} \label{hvBurgers}
\frac{\partial u}{\partial t} + \kappa (-\triangle)^{\alpha} u - \nu \triangle u + u \cdot \nabla u =0, \quad \mbox{with} \, \, \alpha >1,
\end{equation}
subject to periodic boundary conditions, nothing is known about the global regularity of \eqref{hvBurgers}, for large initial data, even in the two-dimensional case. This is because we lose the maximum principle in the hyper-viscous case, which is the only global \textit{a priori} bound available for \eqref{vBurgers}.  Indeed, it would be interesting if one could show that \eqref{vBurgers} develops a finite-time singularity, while \eqref{vBurgers} is globally well-posed. In particular, any global regularity result concerning \eqref{vBurgers} will shed light on the question of global regularity for the 2D and 3D Kuramoto-Sivashinsky equation (see, e.g., \cite{Bellout_Benachour_Titi_2003, Cao_Titi_2006_KSE} for further discussion of this problem). We observe that in the 1D case, global regularity can be established by standard energy methods (see, e.g., \cite{Collet_Eckmann_Epstein_Stubbe_1993_Attractor, Constantin_Foias_Nicolaenko_Temam_1989_IM_Book, Goodman_1994,  Tadmor_1986, Temam_1997_IDDS} and references therein).

\section{Singularity Formation by Altering the Nonlinearity}\label{sec_Hou_dropping_the_transport}
\noindent
In this section, we remove the advection term in the evolution of the derivative of the  Burgers equation and show that the resulting equation blows up in finite time.  The alteration  in nonlinearity, and the resulting blow-up phenomenon, are analogous to
a phenomenon observed computationally in \cite{Hou_Lei_2009,Deng_Hou_Yu_2005,Hou_2009} for much more complicated equations governing fluids.   Namely, it was noticed in simulations that, by removing the advection term in the vorticity evolution of the 3D axi-symmetric Navier-Stokes or Euler equations, one can seeming cause these equations to blow up in finite time via a certain mechanism.  However, when the nonlinearity is restored to its original form, the mechanism seems to disappear, and therefore, it is claimed that it is reasonable to expect that the advection is depleting the singularity.  Our purpose in this section has two folds. First,  to show analytically that this phenomenon does indeed occur, at least  in simpler setting of the 1D viscous Burgers equation. The second is to shed more light on this mechanism and to stress that it is not precisely the advection term that depletes singularity; but it is rather a non-local alteration of the nonlinear and the pressure terms that cause this effect, and that, sometimes, such a nonlocal alteration might cause the opposite effect.  In other words, we observe that this alteration of the advection term, in the evolution of the derivative in our example, and in the evolution of the vorticity in \cite{Hou_Lei_2009,Deng_Hou_Yu_2005,Hou_2009}, are in fact non-local in nature and are not as naive as they might seem. This is because any non-local change in the nonlinearity of the hydrodynamic equations is in effect an alteration in the representation of the pressure term, which is the major obstacle in the study of three-dimensional hydrodynamic equations.

To illustrate the observation made in \cite{Hou_Lei_2009,Deng_Hou_Yu_2005,Hou_2009}, we consider the unforced 3D Navier-Stokes equations for incompressible flow, namely
\begin{subequations}\label{BC_Paper_NSE}
\begin{align}
\label{BC_Paper_NSE_mom}
    -\partial_t\bu + (\bu\cdot\nabla)\bu &= -\nabla p +\nu\triangle\bu,
    \\
    \label{BC_Paper_NSE_div}
    \nabla\cdot\bu &=0,
\end{align}
\end{subequations}
in the whole space $\nR^3$, and with a given initial condition.  Here
$\bu=\bu(x_1,x_2,x_3,t)$ is the vector-valued velocity of a fluid, and
$p=p(x_1,x_2,x_3,t)$ is the pressure.
Let us define $\vor:=\nabla\times\bu$, which is known as the vorticity of the fluid.  Taking the curl of \eqref{BC_Paper_NSE_mom} and using \eqref{BC_Paper_NSE_div}, we obtain the well-known vorticity equation namely,
\begin{align}\label{NSE_vor}
   \partial_t\vor +(\bu\cdot\nabla)\vor= (\vor\cdot\nabla)\bu+\nu\triangle\vor.
\end{align}
Recently, in \cite{Hou_Lei_2009}, a reformulation of \eqref{NSE_vor} was given in the axi-symmetric case.  Furthermore, it was suggested in \cite{Hou_Lei_2009,Hou_2009}, based on numerical simulations, that in this new formulation, the analogue of the advection term  $(\bu\cdot\nabla)\vor$ may prevent the blow-up of solutions.  Specifically, it is suggested that this term may be responsible for depleting the singularity.  It is worth noting that a similar phenomenon is also conjectured to occur in a generalization of the Constantin-Lax-Majda equation; see, e.g., \cite{Okamoto_Sakajo_Wunsch_2008} and the references therein.

In comparison to the above remarks about the Navier-Stokes equations, we consider the one-dimensional viscous Burgers equation with Neumann boundary conditions on the interval $(0,\pi)$:
\begin{align}\label{viscous_burgers}
   u_t + uu_x  &=\nu u_{xx},
   \qquad
   u_x(0,t)=u_x(\pi,t) = 0,
   \qquad
   u(x,0) = u_0(x),
\end{align}
with viscosity $\nu>0$.   In \cite{Cao_Titi_2002_Burgers_Neumann}, it was proven that, for $u_0\in C(\cnj{\Omega})$, there exists a unique (global) solution $u$ to \eqref{viscous_burgers} satisfying $u\in L^2_{\text{loc}}((0,T],H^3)\cap C((0,T],H^2)\cap L^2((0,T],H^1)\cap C([0,T],L^2)$.  If $u_0\in H^1(\Omega)\cap C(\cnj{\Omega})$, one can additionally show  that $u\in C([0,T],H^1)$.  (See also \cite{Protter_Weinberger_1984,Ladyzhenskaya_1968} for classical results on Burgers' equation.)

Differentiating the equation in \eqref{viscous_burgers} with respect to $x$ and denoting $\omega:=u_x$, we obtain
\begin{align}\label{Burgers_x}
    \omega_t + u\omega_x &= \nu\omega_{xx}-\omega^2.
\end{align}
We show that removing the advection term $u\omega_x$ from \eqref{Burgers_x} allows for solutions which develop a singularity in finite time.  We note that this phenomenon was also pointed out, without proof, in \cite{Okamoto_Sakajo_Wunsch_2008_Ohkitani_Review}, which is a review of  \cite{Okamoto_Sakajo_Wunsch_2008}.  Consider the problem
\begin{align}\label{Burgers_no_transport}
    \omega_t  &= \nu\omega_{xx}-\omega^2,
        \qquad
    \omega\big|_{\partial\Omega}=0,
    \qquad
   \omega(x,0) = \omega_0(x).
\end{align}
A generalized version of the equation in \eqref{Burgers_no_transport} was studied in \cite{Fujita_1966} (see also \cite{Ball_1977,Quittner_Souplet_2007}).
Since \eqref{Burgers_no_transport} is the viscous (PDE) version of the Riccati equation $\dot{y}=y^2$, it is not surprising that it can develop a singularity in finite time.  Indeed, in \cite{Quittner_Souplet_2007} it was proven that for  initial data which is everywhere positive, solutions must develop a singularity in finite time.
%
Here, for the sake of completeness, we give a different proof, which shows that blow-up may also occur for initial data which is not everywhere positive.

\begin{theorem}
   There exists an $M >0$ such that if the initial data $\omega_0\in C([0,\pi])$ satisfies
   \begin{align*}
      \int_0^\pi\omega_0(x)\sin(x)\,dx < -M,
   \end{align*}
then the corresponding solution $\omega$ to \eqref{Burgers_no_transport} blows up in finite time.  More precisely, there exists a time $T^*\in(0,\infty)$ such that
\begin{align}\label{blow_up_omega}
   \lim_{t\maps {T^*}^-}\int_0^\pi \omega(x,t)\sin(x)\,dx =-\infty.
\end{align}
\end{theorem}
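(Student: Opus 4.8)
The plan is to introduce the weighted quantity $J(t) := \int_0^\pi \omega(x,t)\sin(x)\,dx$, exactly mirroring the role played by $z(t)$ in the proof of Theorem~\ref{T:blowup}, and to derive a differential inequality for $J$ that forces blow-up in finite time. Note that $\sin(x)$ is precisely the first Dirichlet eigenfunction $\vphi_1$ on $(0,\pi)$, with eigenvalue $\lambda_1 = 1$, and it satisfies $\sin(0)=\sin(\pi)=0$. This is the natural test function to pair against \eqref{Burgers_no_transport}, since the homogeneous Dirichlet boundary condition $\omega|_{\partial\Omega}=0$ will make the boundary terms vanish upon integrating by parts.

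First I would multiply the equation in \eqref{Burgers_no_transport} by $\sin(x)$ and integrate over $(0,\pi)$. Integrating the diffusion term $\nu\omega_{xx}$ by parts twice and using that $\sin$ is an eigenfunction of $-\partial_{xx}$ with eigenvalue $1$ (together with $\sin(0)=\sin(\pi)=0$ and $\omega|_{\partial\Omega}=0$), the diffusion contribution becomes $-\nu J(t)$. This yields the identity
\begin{align*}
   J'(t) + \nu J(t) = -\int_0^\pi \omega^2(x,t)\sin(x)\,dx.
\end{align*}
The next step is to bound the right-hand side below by a multiple of $-J(t)^2$. Since $\sin(x)\geq 0$ on $[0,\pi]$, I can apply the Cauchy--Schwarz inequality in the form
\begin{align*}
   \abs{J(t)} = \abs{\int_0^\pi \omega\,\sin^{1/2}(x)\,\sin^{1/2}(x)\,dx}
   \leq
   \pnt{\int_0^\pi \omega^2\sin(x)\,dx}^{1/2}\pnt{\int_0^\pi \sin(x)\,dx}^{1/2},
\end{align*}
and since $\int_0^\pi \sin(x)\,dx = 2$, this gives $\int_0^\pi \omega^2\sin(x)\,dx \geq \tfrac{1}{2}J(t)^2$. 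Substituting, I obtain the Riccati-type differential inequality $J'(t) + \nu J(t) \leq -\tfrac{1}{2}J(t)^2$.

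From here the argument is a standard ODE comparison, structurally identical to the one in Theorem~\ref{T:blowup}: if $J(0) < -M$ for a suitable threshold $M>0$ (chosen so that the quadratic term dominates the linear one, forcing $J'(0)<0$ and keeping $J$ negative and decreasing), then setting $y(t) := e^{\nu t}J(t)$ and integrating the resulting inequality $y'(t) \leq -\tfrac{1}{2}e^{-\nu t}y(t)^2$ shows that $|J(t)| \to \infty$ in finite time, establishing \eqref{blow_up_omega}. The main subtlety, rather than an obstacle, is pinning down the explicit constant $M$ and verifying that the sign of $J$ is preserved: because the linear term $-\nu J$ is actually destabilizing when $J<0$ (it pushes $J$ upward), one must ensure $M$ is large enough that the quadratic term $-\tfrac12 J^2$ wins, guaranteeing $J$ stays negative and runs off to $-\infty$ before the linear term can reverse the trend. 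A clean way to organize this is to compare with the autonomous inequality $J' \leq -\tfrac14 J^2$ on the region where $\nu|J| \leq \tfrac14 J^2$, i.e.\ where $|J|\geq 4\nu$, which yields an explicit finite blow-up time once $M \geq 4\nu$.
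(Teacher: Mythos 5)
Your proposal is correct and follows essentially the same route as the paper: pair the equation with the first Dirichlet eigenfunction $\sin(x)$, use the weighted Cauchy--Schwarz inequality $\left(\int_0^\pi\omega\sin(x)\,dx\right)^2\leq 2\int_0^\pi\omega^2\sin(x)\,dx$, and close a Riccati-type differential inequality for $J(t)$. The only cosmetic difference is the treatment of the diffusion term: you use the exact eigenfunction identity to obtain the linear term $-\nu J$ and absorb it into the quadratic term on the region $|J|\geq 4\nu$, whereas the paper estimates $\nu\int_0^\pi\omega\,\vphi_{xx}\,dx$ by a weighted Cauchy--Schwarz and Young's inequality to reach $\dot y\leq \nu^2-\tfrac14 y^2$ with threshold $M=\sqrt{8}\,\nu$; both versions yield the same conclusion.
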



\begin{proof}
We proceed somewhat formally, as we only wish to illustrate the main ideas. For notational simplicity, $\vphi(x) = \sin(x)$.
Taking the inner product of \eqref{Burgers_no_transport} with $\vphi$ and integrating by parts twice gives
\begin{align}\label{Ricati_parts}
   \frac{d}{dt}\int_0^{\pi} \omega \vphi\,dx = \nu\int_0^{\pi}\omega\vphi_{xx}\,dx -\int_0^{\pi} \omega^2 \vphi\,dx.
\end{align}
By the Cauchy-Schwarz inequality,
\begin{align*}
   \pnt{\int_0^{\pi} \omega \vphi\,dx}^2
   &=
   \pnt{ \int_0^{\pi} \omega \vphi^{\frac{1}{2}}\vphi^{\frac{1}{2}}\,dx}^2
   \leq
   \pnt{\int_0^{\pi} \omega^2 \vphi\,dx}\pnt{ \int_0^{\pi} \vphi\,dx}
    =
    2\int_0^{\pi} \omega^2 \vphi\,dx.
\end{align*}
Furthermore,
\begin{align*}
   \int_0^{\pi}\omega\vphi_{xx}\,dx
   &=
   \int_0^{\pi}\omega\vphi^{\frac{1}{2}}\vphi_{xx}\vphi^{-\frac{1}{2}}\,dx
   \\&\leq
    \pnt{\int_0^{\pi}\omega^2\vphi\,dx}^{\frac{1}{2}}
    \pnt{\int_0^{\pi}\vphi_{xx}^2\vphi^{-1}\,dx}^{\frac{1}{2}}
  =
 \sqrt{2}\pnt{\int_0^{\pi}\omega^2\vphi\,dx}^{\frac{1}{2}}.
\end{align*}
Using these estimates in \eqref{Ricati_parts} along with Young's inequality yields
\begin{align*}\label{Ricati_subs}
   \frac{d}{dt}\int_0^{\pi} \omega \vphi\,dx
   &\leq
    \nu\sqrt{2}\pnt{\int_0^{\pi}\omega^2\vphi\,dx}^{\frac{1}{2}}
   -\int_0^{\pi} \omega^2 \vphi\,dx
    \\&\leq
    \nu^2-\frac{1}{2}\int_0^{\pi}\omega^2\vphi\,dx
    \leq
    \nu^2-\frac{1}{4}\pnt{\int_0^{\pi} \omega \vphi\,dx}^2.
\end{align*}
Setting $y(t) := \int_0^{\pi} \omega \vphi\,dx$,
we have $\dot{y}\leq \nu^2-y^2/4$.  Choosing $y(0)< - \sqrt{8}\nu$,
we have by continuity that $y(t)< - \sqrt{8}\nu$ for all $t\in[0,\delta]$, for some $\delta >0$.  Thus, $\dot{y}< - \frac{1}{8} y^2$ on $[0,\delta]$, so $y$ is decreasing $[0,\delta]$, and we therefore have that $y(t)< - \sqrt{8}\nu$ for all time.  Therefore, $\dot{y}< - \frac{1}{8}y^2$ for all time.  Integrating on $[0,t]$, we find
\begin{align*}
 y(t) < (\tfrac{t}{8}+(y(0))^{-1})^{-1}.
\end{align*}
Thus, setting $M=- \sqrt{8}\nu$ and $T^*=-8(y(0))^{-1}=-8\pnt{\int_0^{\pi} \omega_0 \vphi\,dx}^{-1}>0$, we obtain \eqref{blow_up_omega}.
\end{proof}

We notice that in eliminating the advection term $u\omega_x$ from \eqref{Burgers_x}, we have
gone from a non-local equation to a local equation, since $u$ is the
anti-derivative of $\omega$, which is non-local in $\omega$.  Thus, perhaps the
regularizing effect of the term $u\omega_x$ is due in part to its non-local nature.

An analogous effect appears in the context of the Navier-Stokes
equations.  Notice that dropping the advection term in \eqref{NSE_vor} will
affect the pressure term.  Indeed, if we make a drastic alteration to the Navier-Stokes equations and formally drop the pressure term entirely (and also drop \eqref{BC_Paper_NSE_div}, so that the system is not overdetermined), the result is the 3D viscous Burgers equation, which is known to be globally well-posed (see, e.g., \cite{Ladyzhenskaya_1968}). On the other hand, consider the 2D Euler equations.  These equations are known to be globally well-posed, but by formally dropping the pressure term (and again the divergence-free condition), we arrive at the 2D inviscid Burgers equation, which blows up in finite time.

In conclusion any non-local alteration of the nonlinearity in the hydrodynamic equations is in effect leading to alteration in the pressure representation. This in turn might be a stabilizing or destabilizing mechanism of the modified equation.

\section*{Acknowledgements}

This work is supported in part by  the Minerva Stiftung Foundation, and the National Science Foundation grants numbers DMS--1009950, DMS--1109640 and DMS--1109645.

\begin{scriptsize}
\def\cprime{$'$}

\end{scriptsize}

\end{document}